\newtheorem{theorem}{Theorem}[section]
\newtheorem{cor}[theorem]{Corollary}
\newtheorem{lemma}[theorem]{Lemma}
\newtheorem{remark}[theorem]{Remark}
\newtheorem{definition}[theorem]{Definition}
\numberwithin{equation}{section}
\definecolor{Aquamarine}{cmyk}{0.82,0,0.30,0}
\newcommand{\black}{\color{black}}
\newcommand{\jump}[1]{\text{{\rm \textlbrackdbl}}{#1}\text{{\rm \textrbrackdbl}}}
\def\Borelset{B}
\def\openBrai{\Omega}
\def\subopenBrai{A}
\def\newconst{\overline C}
\def\constcompu{H}
\def\constg{C}
\def\ATeps{{\rm F}_\var}
\def\ATtwoeps{{{\rm F}_\var^{\Suno}}}
\def\ATtwoepsk{{{\rm F}_{\var_k}^{\Suno}}}
\def\circle{{\mathbb S}^1}
\def\constlemmabase{C}
\def\dimension{n}
\def\dki{d_k^{(i)}}
\def\domAT{\mathcal D}
\def\domATgrande{\widehat{\mathcal D}_{\Suno}}
\def\domATpiccolo{\domAT_{\Suno}}
\def\etaprimo{\eta'}
\def\etasecondo{\eta''}
\def\funzioni_vito_k{\phi_k}
\def\Hausdorff{\mathcal H^{\dimension-1}}
\def\indiceteo{m}
\def\limk{\lim_{k \to +\infty}}
\def\liminfk{\liminf_{k \to +\infty}}
\def\limsupk{\limsup_{k \to +\infty}}
\def\livellok{t(k)}
\def\minvalueu2{m_2[u]}
\def\MMepsk{{\rm MM}_{\var_k}}
\def\ModicaMortolaeps{{\rm MM}_\var}
\def\MS{{\rm F}}
\def\MSlift{{\rm F}_{{\rm lift}}}
\def\NN{{\mathbb N}}
\def\nuovoliftingk{\phi_k}
\newcommand{\Om}{\Omega}
\def\openAT{\Omega}
\def\opensetAT{\openAT}
\def\openset{U}
\newcommand{\pip}{(i)}
\newcommand{\pNpop}{(N+1)}
\newcommand{\R}{\mathbb{R}}
\def\Rn{{\mathbb R}^\dimension}
\def\sottoaperto{A}
\def\suk{(u_k)}
\def\Suno{\circle}
\def\trunkphi{\varphi}
\def\var{\varepsilon}
\newcommand{\dx}{\, {\rm d}x}
\newcommand{\ds}{\, {\rm d}s}
\newcommand{\dt}{\, {\rm d}t}
\newcommand{\res}      {\mathop{\hbox{\vrule height 7pt width .5pt depth
			0pt\vrule height .5pt width 6pt depth 0pt}}\nolimits}
\title{$\Gamma$-convergence of free discontinuity problems for
circle-valued maps in the linear regime
}
\author{Giovanni Bellettini\footnote{Dipartimento di Scienze Matematiche, Informatiche e Fisiche,
via delle Scienze 206, 33100 Udine UD, Italy, and ICTP International Centre for Theoretical Physics
Mathematics Section
Strada Costiera, 11I-34151 Trieste Italy. E-mail: giovanni.bellettini@uniud.it
}\and
	Roberta Marziani\footnote{Dipartimento di Ingegneria dell'Informazione e Scienze Matematiche, Universit\`a di Siena, 53100 Siena, Italy.
	E-mail: roberta.marziani@unisi.it}
	\and
	Riccardo Scala\footnote{Dipartimento di Ingegneria dell'Informazione e Scienze Matematiche, Universit\`a di Siena, 53100 Siena, Italy.
		E-mail: riccardo.scala@unisi.it}
}
\begin{document}
	\maketitle
	
	\begin{abstract}
We investigate the $\Gamma$-convergence of Ambrosio–Tortorelli 
type-functionals for circle-valued functions,
in the case  
of volume terms with linear growth.
We show the emergence of a non-local $\Gamma$-limit, which is due
to the topological structure of the target space, and discuss compactness
of minimal liftings.
Our results extend the analysis of a previous work on
the quadratic case.
\end{abstract}
	
	\noindent {\bf Key words:}~Minimizing liftings, $\Gamma$-convergence, 
$\Suno$-valued maps, free boundary problems.

	\vspace{2mm}
	
	\noindent {\bf AMS (MOS) subject clas\-si\-fi\-ca\-tion:} 
	49Q15, 49Q20, 49J45, 58E12.
	\section{Introduction}
	Let $n \geq 1$ and 
$\Omega \subset \mathbb{R}^n$ be a connected and simply connected bounded open set with Lipschitz boundary, and let $\mathbb{S}^1 
=\{(x,y)\in\R^2\colon x^2+y^2=1\}$ denote the unit circle. For $u \colon \Omega \to \mathbb{S}^1$ and $v \colon \Omega \to [0,1]$, 
consider the family of functionals defined, for $\var > 0$, by
	\begin{equation*}\label{ATS1}
	{\rm AT}_\var^{\mathbb{S}^1}(u,v):=\int_\Omega \left(v^2|\nabla u|^2+\var|\nabla v|^2+ \frac{(1-v)^2}{\var}
	\right)  \dx\,.
	\end{equation*}
As in the classical Ambrosio-Tortorelli approximation \cite{AT90,AT92}, 
one observes a  decoupling, as $\var\to0^+$, between the bulk term $\int_\Omega v^2|\nabla u|^2\dx$ and the phase-field term $\int_\Omega(\var|\nabla v|^2+(1-v)^2/\var)\dx$.
However,	in sharp contrast with the classical setting, \cite{BMS25} shows that for $\mathbb{S}^1$-valued maps the $\Gamma$-limit of ${\rm AT}_\var^{\mathbb{S}^1}$  depends on the choice of the functional domain. More precisely, if one considers
	$$(u,v)\in W^{1,1}(\Omega;\mathbb{S}^1)\times W^{1,2}(\Omega)\quad \text{with} \quad v|\nabla u|\in L^2(\Omega)\,,$$ 
	then $	{\rm AT}_\var^{\mathbb{S}^1}$ $\Gamma$-$(L^1)$-converges to the Mumford-Shah functional for circle-valued maps, namely to
	\begin{equation*}
{\rm MS}_{\mathbb{S}^1}(u):=\int_\Omega|\nabla u|^2\dx+\Hausdorff(S_u)\,, \quad u\in SBV^2(\Omega;\mathbb{S}^1)\,.
	\end{equation*}
	If, instead, one restricts to the smaller domain 
	 $$(u,v)\in W^{1,2}(\Omega;\mathbb{S}^1)\times W^{1,2}(\Omega)\,,$$  then $	{\rm AT}_\var^{\mathbb{S}^1}$ $\Gamma$-$(L^1)$-converges to the nonlocal functional
	 \begin{equation*}
	 {\rm MS}_{\rm lift}(u):=\int_\Omega|\nabla u|^2\dx+m_2[u]\,, \quad u\in SBV^2(\Omega;\mathbb{S}^1)\,,
	 \end{equation*}
 where, denoting by $S_\varphi$ the jump set of a function $\varphi$,
 \begin{equation}\label{min-prob}
 	m_2[u]:=\inf\{\Hausdorff(S_\varphi)\colon \varphi\in GSBV^2(\Omega)\,,\ e^{i\varphi}=u \text{ a.e. in }\Omega
 	\}\,.
 \end{equation}
Since $S_u\subseteq S_\varphi$ for any lifting $\varphi$ of $u$, 
it follows 
$${\rm MS}_{\rm lift}(u)\ge{\rm MS}_{\mathbb{S}^1}(u)\,.$$

This dichotomy reflects the influence of the topological structure of $\mathbb{S}^1$, in particular the possibility of a nonzero degree for circle-valued maps. In the first case, one has more freedom in the construction of the recovery sequence, and it is possible to preserve the degree throughout the approximation procedure. In contrast, in the second case, since a map $u \in W^{1,2}(\Omega; \mathbb{S}^1)$ has zero topological degree whenever $\Omega$ is simply connected 
(see, e.g., \cite{Brezis-Mironescu,BMbook}), one is forced to approximate 
maps with nonzero degree by functions with zero degree, which entails an additional energetic cost. This phenomenon is clearly illustrated by the example of the vortex map described in \cite{BMS25} and is observed 
also in other variational problems concerning the relaxation of the area functional in codimension two \cite{BES} and the study of ferromagnetic spin systems \cite{Cicalese-Orlando-Ruf}.

The minimisation problem in \eqref{min-prob} can be formulated for any $u\in SBV^p(\Omega;\mathbb{S}^1)$ with
$p>1$ 
by minimizing 
over all liftings $\varphi\in GSBV^p(\Omega)$; we denote the corresponding value by $m_p[u]$. In 
 \cite{BMS25} it is  proven that $m_p[u]$ admits a minimizer which in general  does not belong to $SBV^p(\Omega)$, 
 and that such minimisers are related either to suitably defined optimal transport problems (when $n=2$) or to certain 
solutions of the Plateau problem (when $n>2$).
\\

  \noindent \textbf{Linear growth setting.}
  In the present paper we consider 
a variant where the quadratic term $|\nabla u|^2$ 
is replaced by a term of the form
$f(|\nabla u|)$,
 with  $f$ having linear growth at infinity (see Section \ref{sec:Setting-and-mainresult} for the precise assumptions). 
  Similarly to the  unconstrained case \cite{ABS,AlicandroFocardi}, the linear growth introduces a genuine interaction 
between the bulk term and the phase-field term
  which is absent in the quadratic case.
In addition, the resulting $\Gamma$-limits, which 
again depend on the choice of the functional domain, are defined on the larger space  $BV(\Omega;\mathbb{S}^1)$ and thus have a Cantor part.

  More precisely, by allowing also for more general phase-field terms (cf.~Section \ref{sec:Setting-and-mainresult}), our first main result~Theorem \ref{thm1} states that if $(u,v)\in W^{1,1}(\Omega;\mathbb{S}^1)\times W^{1,2}(\Omega)$ then the $\Gamma$-limit is a local free-discontinuity functional of the form 
  \begin{equation*}
  \int_\Omega f(|\nabla u|)\dx+ |D^cu|(\Omega)+ \int_{S_u}g(|u^+-u^-|)\,{\rm d}\Hausdorff\,,\quad u\in BV(\Omega;\mathbb{S}^1)\,,
  \end{equation*}
where $g$ is the truncation function introduced in \eqref{def:g} which encodes the 
interaction between 
the volume term and the phase-field term and already appears in the unconstrained linear-growth theory \cite{ABS}.
Our second main result Theorem \ref{thm2} states that if one restricts to the smaller domain $(u,v)\in W^{1,2}(\Omega;\mathbb{S}^1)\times W^{1,2}(\Omega)$, then the $\Gamma$-limit is the nonlocal functional 
   \begin{equation*}
 	\int_\Omega f(|\nabla u|)\dx+ |D^cu|(\Omega)+ m_g[u]\,,\quad u\in BV(\Omega;\mathbb{S}^1)\,,
 \end{equation*}
 where $m_g[u]$ is a minimization problem associated to the cost function $g$, that is
 \begin{equation}\label{limit2}
 m_g[u]:=\inf\Big\{\int_{S_\varphi}g(|\varphi^+-\varphi^-|)\,{\rm d}\Hausdorff\colon \varphi\in GBV(\Omega)\,,\ e^{i\varphi}=u\text{ a.e. in }\Omega
\Big \}\,.
 \end{equation}
 We also prove that $m_g[u]$ 
is actually a minimum, i.e.,
we prove the  existence of a minimal lifting for $m_g[u]$. As a byproduct of this, we establish compactness results for sequences of liftings in the linear-growth setting (Theorems~\ref{thm:compactness} and \ref{teo:compactness}), extending the results of \cite{BMS25} to the full  $BV$-framework.

The linear growth case entails substantial differences in the proof strategy. First, compactness results for liftings in the class 
$GBV$ are required, which rely on slicing arguments and measure-theoretic localization techniques. Moreover, the genuine interaction between the bulk and the phase-field terms prevents a direct adaptation of the quadratic-growth analysis and necessitates a localized approach to 
$\Gamma$-convergence. These tools allow us to simplify several arguments with respect to the quadratic case, while at the same time extending the theory to a broader and more flexible variational framework.
 In addition, our analysis reveals natural connections with optimal transport problems and with the relaxation of the area functional in $\mathbb{S}^1$. We briefly discuss these relations below and defer a complete investigation to future work. \\
 
 \noindent
 \textbf{Connections with optimal transport.} The quantity $m_g[u]$ admits a natural interpretation as an optimal transport problem with cost  $g$. Assume for simplicity $\Omega\subset\R^2$ and let 
$u\in W^{1,1}(\Omega;\mathbb{S}^1)$ have distributional 
Jacobian of the form
 \begin{equation*}
 	{\rm Det}(\nabla u)=\pi\sum_{i=1}^{N}(\delta_{x_i}-\delta_{y_i})\,,\quad x_i,y_i\in \Omega\,.
 \end{equation*}
  Setting $\mu:=\sum_{i=1}^N\delta_{x_i}$ and $\nu:=\sum_{i=1}^N\delta_{y_i}$ one can show that
  \begin{equation*}
m_g[u]= \inf 
\int_R g(\theta)\,{\rm d}\mathcal H^1\,,
  \end{equation*}
  where the infimum is taken over all 
integer multiplicity $1$-currents $T=(R,\theta,\tau)$ with $\partial T=\mu-\nu$. We refer to \cite{BMS25} for the discussion on the equivalence between the minimization problem $m_g[u]$ and the optimal transport problem with cost function $g\equiv1$.
   For optimal transport problems in this setting we refer to \cite{MMS,MMT,FDW,Brezis-Mironescu2} and references therein.
   \\
 
 \noindent\textbf{Connections with the relaxation of the area functional in $\mathbb{S}^1$.} 
 The relaxed area in $\mathbb{S}^1$ of the graphs of $L^1$-functions $u\colon\Omega\to \mathbb S^1$ is defined by
 \begin{equation*}\label{relaxed-area}
 	\mathcal{A}^{\mathbb S^1}(u,\Omega):=\inf\left\{\liminf_{k \to +\infty}
 	\int_\Omega\sqrt{1+|\nabla u_k|^2}\dx\colon u_k\in C^1(\Omega;\mathbb{S}^1)\,,\ u_k\to u\text{ in } L^1
 	\right\}\,.
 \end{equation*}
This functional was characterised in terms of Cartesian currents (cf. \cite{Giaq}). Alternatively one could propose a characterisation using lifting theory.  For instance, if $\Omega$ is simply connected,  it is not difficult to prove that 
	\begin{equation*}
	\mathcal{A}^{\mathbb S^1}(u,\Omega)= \int_\Omega\sqrt{1+|\nabla u|^2}\dx+ |D^cu|(\Omega)+ m[u]\,,
\end{equation*}
where 
\begin{equation*}
	m[u]:=\min\left\{\int_{S_\varphi}|\varphi^+-\varphi^-|{\rm d}\Hausdorff\colon \varphi\in BV(\Omega)\,,\, e^{i\varphi}=u \text{ a.e. in }\Omega
	\right\}\,.
\end{equation*} 
The similarity with \eqref{limit2} (when $f(t)=\sqrt{1+t^2}$) suggests the possibility of approximating 
$\mathcal{A}^{\mathbb S^1}$ through suitably chosen linear-growth Ambrosio–Tortorelli type functionals. A diagonal argument indeed provides such an approximation, opening interesting perspectives for future research related to the relaxation of the area functional in codimension two (see \cite{BP,BES,BES2,BES3,BMS,C, Scala, BP, BSS, SS} and references therein for related results). 

Let us also mention that, due to its property of approximation of the Mumford-Shah type energy of circle valued maps, as observed in \cite{BMS25}, our Ambrosio-Tortorelli functional has a strict connection with analysis of Ginzburg-Landau energies and dislocations mechanics (we refer to \cite{BBH} for the former topic). Specifically, the Mumford-Shah functional for $\mathbb S^1$-valued maps has been successfully used to model the appearence of dislocation in $2$-dimensional domains (see \cite{DLSVG,CDLS} for more details; we also refer to the introduction of \cite{BMS25} for a more exhaustive discussion on the link with vortices singularities).\\


\noindent\textbf{Content of the paper.} 
In Section \ref{sec:notation_and_main_definitions} we collect some 
notation and recall some useful results 
needed to prove our main results. 
In Section \ref{sec:Setting-and-mainresult} we 
introduce the problem and state our main results 
Theorem \ref{thm1} and Theorem \ref{thm2}. 
In Section \ref{sec:existence_of_a_lifting_of_minimal_jump} 
we state two compactness results for sequences of liftings 
(Theorem  \ref{thm:compactness} and Theorem \ref{teo:compactness}). 
We also state and prove existence of solution to \eqref{limit2}. Section \ref{sec:compactness_of_liftings_of_a_converging_sequence} is instead devoted to the proof of Theorems  \ref{thm:compactness} and \ref{teo:compactness}. Eventually in Section \ref{sec:Gamma_convergence} we provide the proofs of Theorem \ref{thm1} and Theorem \ref{thm2}.

	\section{Notation and preliminaries}\label{sec:notation_and_main_definitions}
		We start
to recall some notions concerning $BV$ and $GBV$ functions \cite{AmFuPa}
and lifting theory \cite{Brezis-Mironescu}. In what follows:
$\partial^*A$ denotes the reduced boundary of a finite perimeter set $A\subset\R^n$,
$\vert \cdot \vert$ and 
 $\mathcal H^{n-1}$ denote the Lebesgue measure and the $(n-1)$-dimensional Hausdorff measure 
in $\R^n$, respectively,
and 
$\chi_A$ denotes 
the characteristic function of the set $A\subset \Rn$.

	{ 
We recall the following 
lemma
\cite[Proposition 1.16]{Braides-Notes}:

\begin{lemma}[Localisation]
\label{lem:localisation-lemma}
	Let $\openBrai\subset \R^n$ be open bounded with Lipschitz boundary 
and let $\mathcal A(\openBrai)$ be the family of the open subsets of $\openBrai$.
		Let $\mu\colon\mathcal A(\openBrai)\to[0,+\infty)$ 
be a superadditive function on disjoint open sets, 
$\lambda$ be a positive measure on $\openBrai$ and, for $k \in \mathbb N$,
${ h}_k\colon \openBrai\to[0,+\infty]$ be a  
Borel function such that
		\begin{equation*}
\mu(\subopenBrai)\ge \int_{\subopenBrai}
h_k\,{\rm d}\lambda\quad \text{ for every } \subopenBrai 
\in \mathcal{A}(\openBrai) \text{ and } k \in \mathbb N.
		\end{equation*}
	Then
		\begin{equation*}
		\mu(\subopenBrai)\ge \int_\subopenBrai ~
\sup_{k\ge1}h_k \,{\rm d}\lambda\quad \text{ for every } \subopenBrai \in \mathcal{A}(\openBrai)\,.
	\end{equation*}
	\end{lemma}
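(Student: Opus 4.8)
The plan is to pass from the supremum over $k$ to a finite maximum by monotone convergence, and then to treat the finite maximum by splitting $A$ into the (Borel) level sets of that maximum and regularising them from inside.

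Fix $A \in \mathcal{A}(\Omega)$. For $N \in \mathbb{N}$ put $g_N := \max_{1 \le k \le N} h_k$, a nondecreasing sequence of nonnegative Borel functions with $\sup_N g_N = \sup_{k \ge 1} h_k$ pointwise; by the monotone convergence theorem $\int_A \sup_{k \ge 1} h_k \,{\rm d}\lambda = \lim_{N} \int_A g_N \,{\rm d}\lambda$, so it suffices to prove $\mu(A) \ge \int_A g_N \,{\rm d}\lambda$ for each fixed $N$. Note also that the hypothesis forces each $h_k$, hence $g_N$, to be $\lambda$-integrable over $A$, since $\int_A h_k \,{\rm d}\lambda \le \mu(A) < +\infty$.

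Fix $N$ and decompose $A$ into the pairwise disjoint Borel sets $A_1 := \{x \in A : h_1(x) = g_N(x)\}$ and $A_j := \{x \in A : h_j(x) = g_N(x)\}\setminus(A_1 \cup \dots \cup A_{j-1})$ for $2 \le j \le N$; these cover $A$, and $h_j = g_N$ on $A_j$, so that $\int_A g_N \,{\rm d}\lambda = \sum_{j=1}^N \int_{A_j} h_j \,{\rm d}\lambda$. Given $\varepsilon > 0$, by absolute continuity of the integral and inner regularity of $\lambda$ by compacts, I would choose pairwise disjoint compacts $K_j \subset A_j$ with $\int_{A_j \setminus K_j} h_j \,{\rm d}\lambda < \varepsilon/N$, and then pairwise disjoint open sets $U_j$ with $K_j \subset U_j \subset A$ (small neighbourhoods of the $K_j$, intersected with $A$). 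Applying the hypothesis to each $U_j$ and using $h_j = g_N$ on $A_j \supset K_j$ gives $\mu(U_j) \ge \int_{U_j} h_j \,{\rm d}\lambda \ge \int_{K_j} h_j \,{\rm d}\lambda = \int_{K_j} g_N \,{\rm d}\lambda \ge \int_{A_j} g_N \,{\rm d}\lambda - \varepsilon/N$; summing over $j$ and invoking the superadditivity of $\mu$ on the disjoint family $\{U_j\}$ together with its monotonicity ($\bigcup_j U_j \subset A$), I get $\mu(A) \ge \mu\big(\bigcup_{j=1}^N U_j\big) \ge \sum_{j=1}^N \mu(U_j) \ge \int_A g_N \,{\rm d}\lambda - \varepsilon$. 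Letting $\varepsilon \to 0$, and then $N \to +\infty$ via the identity above, concludes the proof.

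The one genuinely delicate step is the replacement of the Borel ``argmax'' pieces $A_j$ -- where $h_j$ attains the running maximum -- by pairwise disjoint \emph{open} sets $U_j \subset A$ that retain essentially all the $\lambda$-mass of $A_j$; this is exactly why the argument must pass through inner regularity and compact approximations rather than working with the $A_j$ directly. The remaining ingredients -- the monotone convergence reductions and the final summation via superadditivity of $\mu$ -- are routine.
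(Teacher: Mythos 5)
Your proof is correct, and it is essentially the standard argument; note that the paper itself gives no proof of this lemma, it simply cites \cite[Proposition 1.16]{Braides-Notes}, where the same reduction (monotone convergence to a finite maximum, partition into ``argmax'' Borel pieces, inner approximation by disjoint compacts and then by disjoint open neighbourhoods, superadditivity) is carried out. One small remark on hypotheses: in the final chain you invoke ``monotonicity'' of $\mu$ alongside its superadditivity to pass from $\mu(A)$ to $\sum_j\mu(U_j)$. As written, the lemma only assumes superadditivity on disjoint open sets. The intended (and customary) reading of that phrase in Braides' notes is the strong form $\mu(A)\ge\sum_j\mu(A_j)$ for any finite family of pairwise disjoint open sets $A_j\subset A$, which already yields exactly the inequality you need without a separate monotonicity assumption; under the weaker reading $\mu(A_1\cup A_2)\ge\mu(A_1)+\mu(A_2)$ you would indeed need monotonicity as an extra hypothesis. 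In every application in the paper (Corollary 4.4, Lemma 5.2, and the lower bound in Theorem 3.2) $\mu$ is a $\liminf$ of set functions that are manifestly increasing in $A$, so the distinction is immaterial there, but it is worth stating which form of superadditivity you are using rather than appealing to a monotonicity that is not literally in the statement.
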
}

	\subsection{$BV$ and $GBV$ functions}\label{subsec:SBV_and_GSBV_functions}
Let $\openset \subset \Rn$ be open and bounded, and $m
\geq 1$ be an integer. 
We denote by $BV(\openset;\R^m)$ the space of vector-valued
\textit{functions with bounded variation in} $\openset$, and 
with $|\cdot|_{BV}$ 
and $\|\cdot\|_{BV}$  
the $BV$ seminorm and norm, respectively, i.e.
	$$|u|_{BV}:=|Du|(\openset),\qquad \|u\|_{BV}:=\|u\|_{L^1}+|u|_{BV},$$
	see \cite{AmFuPa}.
We recall that if
$u\in BV(\openset;\R^m)$ then $u\in L^1(U;\R^m)$ and its distributional gradient is a finite $\mathbb{R}^{m\times n}$- valued Radon measure which can be decomposed as
	\begin{equation*}
		Du=\nabla u\mathcal L^n+D^cu+\jump{u}\otimes \nu_u\mathcal{H}^{n-1}\res S_u\,,
	\end{equation*}
	where $\nabla u$ is the approximate gradient of $u$, 
$D^cu$ the Cantor part, $S_u$ is the approximate jump set of $u$, 
$\jump{u}=u^+-u^-$ is the jump opening, $\nu_u$ 
is a unit normal
to $S_u$ and $\otimes$ stands for the tensor product, see \cite[Def. 3.67]{AmFuPa}.
  We say that $u\in BV(\openset;\R^m)$  is a special function of bounded variation in $U$ if $D^cu=0$. We denote by $SBV(U;\R^m)$ the space of vector-valued \textit{special functions with bounded variation in }$U$. For $p>1$ we define also the space
\begin{equation*}
SBV^p(\Omega;\R^m)=\{u\in SBV(\Omega;\R^m)\colon |\nabla u|\in L^p(\Omega)\,,\ \Hausdorff(S_u)<+\infty\}\,.
\end{equation*}

A measurable function $u: U \to \R^m$ belongs to the space of \textit{generalised functions with bounded variation in $U$}, that is, $u\in GBV(U;\R^m)$, if $\phi\circ u\in {BV_{\rm loc}(U)}$
for any 
$\phi\in C^1(\R^m)$ with $\nabla \phi$ compactly supported.\footnote{Recall that $f\in BV_{\rm loc}(U)$ if $f\in BV(U')$ for every $U'\subset U$ 
open with $\overline U'$ compact in $U$.} { Note that also in $GBV$ we can define the approximate gradient, the Cantor part and the jump set. Moreover 
$GBV(\Omega,\R^m)\cap L^\infty(\Omega;\R^m)= BV(\Omega,\R^m)\cap L^\infty(\Omega;\R^m)$. Analogously one can define $GSBV(U;\R^m)$ and $GSBV^p(U;\R^m)$.}
If $m=1$ we write $BV(U)=BV(U;\R)$,  and $GBV(U)=GBV(U;\R)$.
	\begin{remark}[\textbf{Equivalent definition of $GBV$ for $m=1$}]
	\rm $GBV(U)$ can be equivalently defined as the space of measurable functions $u\colon U\to\R$
 such that  $u\wedge M\vee (-M)\in{ BV_{\rm loc}}(U)$ for any $M>0$.
	\end{remark}
	{
We set
			\begin{equation*}
			BV(U;\mathbb{S}^1)=\{u\in BV(U;\R^2)\colon |u|=1\,\text{ a.e. in }\,U\}\,.
		\end{equation*}
	}
Eventually,
a (finite or countable) family $\{E_i\}$ of finite perimeter subsets of a finite perimeter set $F$ is called a  Caccioppoli partition of $F$ 
if the sets $E_i$ are pairwise disjoint, their union is $F$, and the sum of their perimeters is finite.
\subsection{Approximation and compactness for 
the vector Mumford-Shah functional}\label{sec:Gamma_approximation_and_compactness_for_the_MS_functional}
We recall a result by 
Alicandro-Braides-Shah \cite{ABS}, then extended to the vector case by Alicandro-Focardi \cite{AlicandroFocardi}. 
Let 
$$
\domAT
:=\{(u,v)\in W^{1,2}(\opensetAT;\R^m)
\times W^{1,2}(\opensetAT)\colon 0\le v\le1\}.
$$
Consider the family of functionals 
$\ATeps
\colon L^1(\Omega;\R^m)\times L^1(\Omega)\to [0,+\infty]$ given by 
	\begin{equation}\label{AT-lin}
	\ATeps(u,v):=\begin{cases}	\displaystyle
		\int_{\opensetAT}\left(\psi(v)f(|\nabla u|)+\varepsilon|\nabla v|^2+\frac{W(v)}{\varepsilon}\right){\rm d}x&\text{ if }(u,v)
		\in\domAT\,,\\[1em]
		+\infty& \text{otherwise in }L^1(\opensetAT;\R^m)
		\times L^1(\opensetAT)\,,
	\end{cases}
\end{equation}
where 

\begin{enumerate}[label=(\alph*)]
	\item\label{hyp:psi} $\psi\colon[0,1]\to[0,1]$ is an 
increasing lower semicontinuous function with $\psi(0)=0$, $\psi(1)=1$ and $\psi(t)>0$ if $t\ne 0$;
	\item\label{hyp:f} $f\colon[0,+\infty)\to [0,+\infty)$ is 
a convex increasing function  such that 
	\begin{equation*}
		\lim_{t\to\infty}\frac{f(t)}{t}=1\,;
	\end{equation*}
	\item\label{hyp:W} $W\colon [0,1]\to[0,+\infty)$ is a continuous function such that $W(s)=0$ if and only if $s=1$.
\end{enumerate}
Set  also
\begin{equation}\label{def:cW}
	\quad c_W(t):=2\int_t^1\sqrt{W(s)}\ds\,,
\end{equation}
and 
\begin{equation}\label{def:g}
	g(z):=\min\{\psi(t)z+2c_W(t)\colon 0\le t\le 1\}\,.
\end{equation}
Then the following theorem holds.
\begin{theorem}\label{thm:Linear-Ambrosio-Tortorelli} 
Let $\opensetAT
\subseteq\R^n$ be an open set 
with Lipschitz boundary. 
	Then $\ATeps$ 
$\Gamma-L^1$-converge
to the functional $\MS$ defined as 
	\begin{equation*}\label{limite-At-lin}
	\MS(u,v):=
		\displaystyle \int_\opensetAT f(|\nabla u|)\dx+|D^cu|(\opensetAT)+\int_{S_u}g(|u^+-u^-|)\,{\rm d}\Hausdorff
	\end{equation*}
if $u\in GBV(\opensetAT;\R^m)$,  $v=1${ a.e.}, and 
$+\infty$ otherwise in $L^1(\opensetAT;\R^m)
\times L^1(\opensetAT)$,
as $\var\to0^+$.
\end{theorem}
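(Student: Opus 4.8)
This is the linear-growth Ambrosio--Tortorelli theorem established in \cite{ABS} for $m=1$ and extended to the vector-valued case in \cite{AlicandroFocardi}; the plan is therefore to reproduce the standard two-step $\Gamma$-convergence argument in this setting. For the \emph{lower bound}, given $(u_\var,v_\var)\to(u,v)$ in $L^1\times L^1$ with $\liminf_\var\ATeps(u_\var,v_\var)<+\infty$, I would first observe that boundedness of $\int_\Omega W(v_\var)/\var\,\dx$ together with the fact that $W$ vanishes only at $1$ forces $v=1$ a.e., so the only content is the inequality $\liminf_\var\ATeps(u_\var,v_\var)\ge\MS(u,1)$. The heart of the matter is a one-dimensional estimate reached by slicing: for $\xi\in\mathbb S^{n-1}$ and $\Hausdorff$-a.e.\ line in direction $\xi$ the restrictions lie in $W^{1,2}$ of the corresponding segment, and Fubini reduces the task to proving, for $w_\var\to w$, $z_\var\to1$ on an interval $I$,
\begin{multline*}
\liminf_\var\int_I\Big(\psi(z_\var)f(|w_\var'|)+\var|z_\var'|^2+\tfrac{W(z_\var)}{\var}\Big)\dt\\
\ge\ \int_I f(|w'|)\dt+|D^cw|(I)+\sum_{t\in S_w}g\big(|w^+(t)-w^-(t)|\big)\,.
\end{multline*}
On the set where $z_\var$ is close to $1$ one has $\psi(z_\var)f(|w_\var'|)\ge f(|w_\var'|)-\mathrm{o}(1)\,(1+|w_\var'|)$, and lower semicontinuity of $w\mapsto\int_I f(|w'|)\dt+|D^cw|(I)$ under weak-$*$ convergence (exploiting the linear growth of $f$) yields the first two terms; on the (small) set where $z_\var$ moves away from $1$ the Modica--Mortola inequality
\[
\var|z_\var'|^2+\tfrac{W(z_\var)}{\var}\ \ge\ 2\sqrt{W(z_\var)}\,|z_\var'|=\big|(c_W\circ z_\var)'\big|
\]
shows that each dip of $z_\var$ down to a level $\approx t$ costs at least $2c_W(t)$ in the phase-field term, while Jensen's inequality and the linear growth of $f$ show that the bulk term there costs at least $\psi(t)$ times the jump accumulated by $w_\var$ across the dip; minimising over $t$ produces exactly the density $g$. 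Finally I would localise, regard $A\mapsto\liminf_\var\int_A(\cdots)$ as a superadditive set function, apply the localisation Lemma~\ref{lem:localisation-lemma}, and let $\xi$ range over a countable dense set of directions, using $\sup_{|\xi|=1}f(|p\cdot\xi|)=f(|p|)$ (and the analogous identities for the jump and Cantor densities) to reassemble $\MS(u,1)$.

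For the \emph{upper bound}, by lower semicontinuity of $\MS$ and a diagonal argument it suffices to build recovery sequences on a class dense in energy, e.g.\ $u$ piecewise $W^{1,2}$ with polyhedral jump set; the reduction of a general $u\in GBV(\Omega;\R^m)$ to such functions is the standard relaxation argument for linear-growth functionals, and it is precisely here that the term $|D^cu|(\Omega)$ arises, through the recession $f^\infty(t)=t$. For such a $u$ I would set $v_\var\equiv1$ and $u_\var\equiv u$ away from $S_u$, and in a tube of width $\delta_\var\to0$ around the regular part of $S_u$ insert the optimal Modica--Mortola profile dipping $v_\var$ from $1$ to a minimiser $t_\ast$ of the problem defining $g(|u^+-u^-|)$, letting $u_\var$ interpolate linearly (transversally to $S_u$) across the tube; a direct computation then gives on each facet a contribution converging to $\psi(t_\ast)|u^+-u^-|+2c_W(t_\ast)=g(|u^+-u^-|)$, while the bulk term converges to $\int_\Omega f(|\nabla u|)\,\dx$ and the transition tubes are negligible for the bulk.

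The step I expect to be the main obstacle, and which is genuinely new with respect to the quadratic Ambrosio--Tortorelli theorem, is the coupling between the bulk and phase-field terms forced by the linear growth: in the lower bound one must control, on the \emph{same} vanishing set, both the dip of $z_\var$ and the jump accumulated by $w_\var$, and in the upper bound one must balance $\delta_\var$ against $\var$ so that both the profile cost $2c_W(t_\ast)$ and the bulk cost $\psi(t_\ast)|u^+-u^-|$ are attained simultaneously without parasitic contributions. Handling the Cantor part via the relaxation step described above is the remaining technical ingredient.
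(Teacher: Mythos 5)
The paper does not prove Theorem~\ref{thm:Linear-Ambrosio-Tortorelli}: it is stated as a recalled result, with the proof delegated to Alicandro--Braides--Shah \cite{ABS} (scalar case) and Alicandro--Focardi \cite{AlicandroFocardi} (vector case). Your sketch faithfully reconstructs the standard strategy of those references --- slicing to one dimension for the lower bound, Modica--Mortola coarea bound plus a bulk estimate on the dips of $z_\var$ to produce the density $g$, localisation via Lemma~\ref{lem:localisation-lemma}, and a density-in-energy reduction to piecewise regular $u$ for the upper bound, with the Cantor part emerging from the recession $f^\infty(t)=t$ --- so it is consistent with the proof the paper is invoking, even though the paper itself contains none. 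One small remark: the step $\psi(z_\var)f(|w_\var'|)\ge f(|w_\var'|)-o(1)(1+|w_\var'|)$ near the level set $\{z_\var\approx1\}$ implicitly uses that $\psi$ is left-continuous at $1$; this does hold, since an increasing lower semicontinuous function on $[0,1]$ with $\psi(1)=1$ is automatically left-continuous at $1$, but it is worth making explicit since hypothesis~\ref{hyp:psi} only asserts lower semicontinuity.
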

\begin{remark}\label{rem:decoupling}\rm
	By inspecting the proof of Theorem \ref{thm:Linear-Ambrosio-Tortorelli} 
one actually deduces the following  properties:
\begin{enumerate}
\item[(i)] {\textbf{$\Gamma$-convergence on a larger domain.} The result  still holds if one replaces $\mathcal{D}$ with the larger class 
$$
\widehat\domAT
:=\{(u,v)\in W^{1,1}(\opensetAT;\R^m)
\times W^{1,2}(\opensetAT)\colon 0\le v\le1\}.
$$
}
\item[(ii)] {\textbf{Lower bound on open sets.}} 
The lower bound inequality holds on every $A\in \mathcal{A}(\Omega)$. 
Precisely, let $\ATeps(\cdot,\cdot,A)$ and $\MS(\cdot,\cdot,A)$ denote the 
localised functionals on $A$ and let $(u_\var,v_\var)\to(u,v)$ in $L^1(\Omega;\R^m)\times L^1(\Omega)$. Then
\begin{equation*}
\liminf_{\var\to0^+}\ATeps
(u_\var,v_\var,A)\ge \MS(u,v,A);
\end{equation*}
\item[(iii)] {\textbf{$g$ is a truncating function.}} The function $g$ in \eqref{def:g} satisfies the following properties:
\begin{enumerate}[label=$\arabic*.$]
	\item $g$ is increasing, $g(0)=0$ and $\lim_{z\to+\infty}g(z)=2c_W(0)$;
	\item $g$ is subadditive, i.e., $g(z_1+z_2)\le g(z_1)+g(z_2)$ for all $z_1,z_2\in [0,+\infty)$;
	\item $g$ is Lipschitz-continuous with Lipschitz constant 1;
	\item $g(z)\le z$ for all $z\in [0,+\infty)$ and $\lim_{z\to0^+}g(z)/z=1$;
	\item For any $T>0$ there exists $c_T>0$ such that $z\le c_Tg(z)$ for all $z\in [0,T]$;
	\item For $\sigma >0$ there exists $C_\sigma>0$ such that $g(z)\ge C_\sigma$ for all $z\ge\sigma>0$.
\end{enumerate}
	Properties $1.$ -- $5.$ follow by \cite[Remark 4.2]{ABS}. Property $6.$ instead follows by observing that for any $z\ge\sigma>0$ it holds
\begin{equation*}
	\psi(t)z+2c_W(t)\ge \psi(t)\sigma +2c_W(t)>0 \quad \text{ for all }0\le t\le1\,.
\end{equation*}
Thus $g(z)\ge C_\sigma$ for all $z\ge\sigma>0$.
\end{enumerate}

\end{remark}

%
%
\subsection{Liftings of $\mathbb{S}^1$-valued maps}
\label{subsec:liftings_of_S1_maps}
 Let $\Omega\subset\R^n$ be a bounded connected and simply connected
open  set with Lipschitz boundary, and  $u\colon\Omega\to \mathbb{S}^1$ 
be a measurable function. A \textit{lifting} of $u$ is a measurable function $\varphi\colon\Omega\to\R$ such that 
 \begin{equation*}
 	u(x)=e^{i\varphi(x)}\quad \text{for a.e. }x\in\Omega\,.
 \end{equation*}
{
Given a Borel set $B\subseteq\Omega$, we say that $\varphi:B\rightarrow \R$ is a lifting of $u$ on $B$ if the previous equality holds a.e. on $B$. 
}Clearly, 
if $\varphi$ is a lifting of $u$, then so is $\varphi+2\pi m$ 
for all $m\in \mathbb{Z}$.

If $u$ has some regularity, a natural question is  whether $\varphi$ can be chosen with the same regularity. The answer is 
partially positive, 
see \cite{Brezis-Mironescu,Dav-Ign} for more details: we recall in particular
that if $n\ge2$ and $u\in W^{1,p}(\Omega;\mathbb{S}^1)$ for some $p\in[2,+\infty)$, then $u$ has
 a lifting $\varphi\in W^{1,p}(\Omega)$, and $\varphi$ is unique (mod $2\pi$). Moreover,
for $n=2$ and 
$p\in[1,2)$ there exists $u\in W^{1,p}(\Omega;\mathbb{S}^1)$ for which 
there are no liftings $\varphi\in W^{1,p}(\Omega)$,
see \cite[Theorem 1.2, Remark 1.9]{Brezis-Mironescu}.
Indeed it can be shown
{\cite[Pages 17-19]{Brezis-Mironescu}}
that there are no liftings of the vortex map $u_V(x)=x/|x|$ in $W^{1,1}(B_1)$
(and thus there are no liftings in $W^{1,p}(B_1)$ for all $p\in[1,2)$).\\

\noindent
Next we recall the
\begin{theorem}[\textbf{Davila-Ignat}]\label{thm:dav-ign}
	Let $u\in BV(\Omega;\mathbb{S}^1)$. 
Then there exists a lifting $\varphi\in BV(\Omega)$ such that $\|\varphi\|_{L^\infty}\le2\pi$ and $|\varphi|_{BV}\le2|u|_{BV}$.
\end{theorem}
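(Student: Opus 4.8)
The plan is to build the lifting by an averaging argument over the position of the branch cut. For $a\in(-\pi,\pi]$ let $\beta_a\colon\mathbb{S}^1\to(a-\pi,a+\pi]$ be the branch of the argument with cut at the point $-e^{ia}$: thus $\beta_a$ is a $C^\infty$ unit-speed isometry on $\mathbb{S}^1\setminus\{-e^{ia}\}$, it has a single jump of amplitude $2\pi$ at $-e^{ia}$, and its range is contained in $(-2\pi,2\pi]$. Define $\varphi_a:=\beta_a\circ u$; then $e^{i\varphi_a}=u$ a.e.\ in $\Omega$ and $\|\varphi_a\|_{L^\infty}\le 2\pi$ for every $a$. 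I would show that $\varphi_a\in BV(\Omega)$ for a.e.\ $a\in(-\pi,\pi]$ and that
\[
\int_{-\pi}^{\pi}|D\varphi_a|(\Omega)\,\mathrm{d}a\le 4\pi\,|Du|(\Omega)\,;
\]
then any $a^{\ast}$ for which $|D\varphi_{a^{\ast}}|(\Omega)\le\tfrac{1}{2\pi}\int_{-\pi}^{\pi}|D\varphi_a|(\Omega)\,\mathrm{d}a\le 2|Du|(\Omega)$ yields the desired lifting $\varphi:=\varphi_{a^{\ast}}$.

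To prove the integral estimate I would split the contributions to $D\varphi_a$ into the diffuse parts (absolutely continuous and Cantor) and the jump parts. Since $\beta_a$ is a local isometry off $-e^{ia}$ and $\nabla u$ is a.e.\ tangent to $\mathbb{S}^1$, the chain rule gives $|\nabla\varphi_a|=|\nabla u|$ a.e.\ for every $a$; and, since the level sets $\{\tilde u=-e^{ia}\}$ are pairwise disjoint and $|D^{c}u|(\Omega)<\infty$, the chain rule for the Cantor part gives $|D^{c}\varphi_a|(\Omega)\le|D^{c}u|(\Omega)$ for all but countably many $a$. Hence the diffuse part of $\varphi_a$ contributes at most $2\pi\big(|D^{ac}u|(\Omega)+|D^{c}u|(\Omega)\big)$ to the integral in $a$. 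For the jump part carried by $S_u$: for $\mathcal{H}^{n-1}$-a.e.\ $x\in S_u$, writing $\ell(x)\in[0,\pi]$ for the geodesic distance between $u^{+}(x)$ and $u^{-}(x)$, a direct computation gives
\[
\int_{-\pi}^{\pi}|\varphi_a^{+}(x)-\varphi_a^{-}(x)|\,\mathrm{d}a=2\ell(x)\big(2\pi-\ell(x)\big)\le 8\pi\sin\tfrac{\ell(x)}{2}=4\pi\,|u^{+}(x)-u^{-}(x)|\,,
\]
where the middle inequality is the elementary scalar estimate $\ell(2\pi-\ell)\le 4\pi\sin(\ell/2)$ on $[0,\pi]$ (check: its difference vanishes to second order at $0$ and is monotone on $[0,\pi]$); integrating over $S_u$ against $\mathcal{H}^{n-1}$ and using Fubini bounds this contribution by $4\pi\,|D^{j}u|(\Omega)$. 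Finally, $\varphi_a$ may acquire further jumps of amplitude $2\pi$ on a set $\Sigma_a\subset\Omega\setminus S_u$ where the diffuse part of $u$ straddles $-e^{ia}$; $\Sigma_a$ is contained in the essential boundary of a suitable region of the form $\{x:u(x)\in\text{arc ending at }-e^{ia}\}$, and the $BV$ coarea formula for the $\mathbb{S}^1$-valued map $u$ gives $\int_{-\pi}^{\pi}2\pi\,\mathcal{H}^{n-1}(\Sigma_a)\,\mathrm{d}a\le 2\pi\big(|D^{ac}u|(\Omega)+|D^{c}u|(\Omega)\big)$. Adding the four contributions yields precisely $4\pi\,|Du|(\Omega)$.

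It then remains to note that the estimate also certifies $\varphi_{a^{\ast}}\in BV(\Omega)$ (the exceptional values of $a$ form a null set and finiteness of $|D\varphi_{a^{\ast}}|(\Omega)$ is guaranteed by the bound), so that $\varphi:=\varphi_{a^{\ast}}$ satisfies $e^{i\varphi}=u$ a.e., $\|\varphi\|_{L^\infty}\le 2\pi$ and $|\varphi|_{BV}\le 2|u|_{BV}$. I expect the delicate point to be the rigorous treatment of the composition $\beta_a\circ u$ as a $BV$ function and, above all, the identification and estimate of the jump set $\Sigma_a$ produced by the diffuse part of $Du$: this is exactly where averaging in $a$ is indispensable, since for a fixed cut this term can be arbitrarily large, and where one must argue carefully with precise representatives and the coarea formula. (Alternatively, one could fix the good cut $a^{\ast}$ and reduce the $BV$ estimate to almost every line parallel to the coordinate axes, where the analysis of the crossings of $-e^{ia^{\ast}}$ becomes a one-dimensional Banach-indicatrix computation, and conclude via the $BV$ slicing criterion.)
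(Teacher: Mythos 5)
The paper offers no proof of this statement; it is given as a citation to Davila--Ignat and to Brezis--Mironescu, so there is no paper argument to compare against line by line. Your averaging-over-the-branch-cut scheme is in fact the mechanism underlying the cited proofs, and the arithmetic in your sketch is internally consistent: the diffuse parts of $\varphi_a$ contribute $2\pi\big(|D^{ac}u|(\Omega)+|D^{c}u|(\Omega)\big)$, the jumps on $S_u$ contribute at most $4\pi|D^{j}u|(\Omega)$ (and your elementary inequality $\ell(2\pi-\ell)\le 4\pi\sin(\ell/2)$ on $[0,\pi]$ is correct: the difference $D(\ell):=4\pi\sin(\ell/2)-\ell(2\pi-\ell)$ has a concave derivative $D'$ vanishing at both $0$ and $\pi$, hence $D'\ge 0$ and $D\ge D(0)=0$), and the branch-cut jumps are claimed to contribute $2\pi\big(|D^{ac}u|(\Omega)+|D^{c}u|(\Omega)\big)$, totalling $4\pi|Du|(\Omega)$ and thus yielding the constant $2$ after averaging.

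The genuine gap is exactly where you flag it, and it is not cosmetic. Because $\beta_a$ is discontinuous, $\beta_a\circ u$ is not automatically in $BV(\Omega)$ and the Vol'pert chain rule cannot be invoked as stated; and the ``coarea formula for $\mathbb S^1$-valued $BV$ maps'' you use to estimate $\int_{-\pi}^{\pi}2\pi\,\mathcal H^{n-1}(\Sigma_a)\,\mathrm{d}a$ is not an off-the-shelf result. Without these two ingredients the averaging inequality remains heuristic. The route consistent with the cited literature is to first establish the bound for $u$ smooth on a cube, where a smooth lifting $\phi$ exists, the branch-cut jump set is $\{\phi\equiv a+\pi\ (\mathrm{mod}\ 2\pi)\}$, and the standard scalar coarea formula gives $\int_0^{2\pi}\mathcal H^{n-1}(\Sigma_a)\,\mathrm{d}a=\int_\Omega|\nabla\phi|\,\mathrm{d}x=\int_\Omega|\nabla u|\,\mathrm{d}x$ directly; one then approximates a general $u\in BV(\Omega;\mathbb S^1)$ strictly by smooth $\mathbb S^1$-valued maps (mollification followed by projection onto $\mathbb S^1$), passes to the limit by lower semicontinuity of the total variation and weak-$\ast$ compactness of the uniformly bounded liftings. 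This sidesteps composing a $BV$ function with a discontinuous map altogether. Your alternative via slicing and the one-dimensional Banach-indicatrix is also viable, but it still needs a measurable-selection argument producing a single $a^{\ast}$ that is simultaneously good for a.e.\ line in every coordinate direction, together with integrability of the resulting slice variations in the transverse variable.
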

\begin{proof}
See \cite[Theorem 1.1]{Dav-Ign}, and 
also \cite[Theorem 1.4]{Brezis-Mironescu}.
\end{proof}


As usual, for any lifting $\varphi\in SBV(\Om)$ of $u$ 
we write\footnote{$S_\varphi^I$ stands for the ``integer'' part of the
jump,
and $S_\varphi^f$ for the ``fractional'' part.} 
$S_\varphi=S_\varphi^I\cup S_\varphi^f$ where 
$$
S^I_\varphi:=\{x\in S_\varphi:\jump{\varphi}(x)\in 2\pi\mathbb Z\},
\qquad  S_\varphi^f:=S_\varphi\setminus S_\varphi^I.
$$
In particular $S^f_\varphi=S_u$;
{moreover $|\nabla u|=|\nabla \varphi|$ a.e. in $\Omega$ and $|D^cu|(\Omega)=|D^c\varphi|(\Omega)$.}

Let $u\in BV(\Omega;\mathbb{S}^1)$ and consider the minimum problem
\begin{equation*}
\inf\{|\varphi|_{BV}\colon \varphi\in BV(\Omega),\, e^{i\varphi}=u\text{ a.e. in }\Omega\}.
\end{equation*}
Then there exists 
a minimizer $\varphi\in BV(\Omega)$ 
such that $|\varphi|_{BV}\le 2| u|_{BV}$ and $0\le \int_\Omega\varphi\dx\le 2\pi|\Omega|$ \cite[page 25]{Brezis-Mironescu}. 
Moreover, as already mentioned in the introduction, in \cite{BMS25} it was proven that for any $p>1$ and any $u\in SBV^p(\Omega;\mathbb{S}^1)$
the minimisation problem 
\begin{equation*}
m_p[u]:=\{\Hausdorff(S_\varphi)\colon \varphi\in GSBV^p(\Omega),\, e^{i\varphi}=u\ \text{a.e. in }\Omega
\}
\end{equation*}
admits a solution.
{Here, we are instead concerned with 
the existence of minimal lifting for  a minimisation problem which depends on the cost function $g$ in \eqref{def:g} (cf.~\eqref{limit2} for its precise definition). 	}
	\section{Setting of the problem and main results} 
\label{sec:Setting-and-mainresult}
In this section we introduce the variational model under consideration and 
state our main results.
Let $n \geq 2$ and  $\Omega\subset\R^n$ be a connected bounded and open set with Lipschitz boundary.
To the pair $u\colon\Omega\to \mathbb S^1$, $v\colon\Omega\to[0,1]$  
we associate the functional
\begin{equation*}
	{\rm F}_{\var}(u,v):=\int_{\Omega}\left(\psi(v)f(|\nabla u|)+\varepsilon|\nabla v|^2+\frac{W(v)}{\varepsilon}\right){\rm d}x\,,
\end{equation*}
where $\var \in (0,1]$, $\psi$, $f$ and $W$ 
satisfy properties \ref{hyp:psi}, \ref{hyp:f} and \ref{hyp:W} 
in Section \ref{sec:Gamma_approximation_and_compactness_for_the_MS_functional}.

We consider  the  functional domains
$\domATgrande\supset\domATpiccolo$ given by
\begin{equation}\label{eq:two_domains}
	\begin{aligned}
		\domATgrande
		:=\left\{(u,v)\in W^{1,1}(\Omega;\mathbb{S}^1)\times W^{1,2}(\Omega)\colon \ 0\le v\le1
		\right\}\subset 
		L^1(\Om; \circle) \times L^1(\Om)\,,\\[1em]
		\domATpiccolo
		:=\left\{(u,v)\in W^{1,2}(\Omega;\mathbb{S}^1)\times W^{1,2}(\Omega)\colon \ 0\le v\le1
		\right\}\subset 
		L^1(\Om; \circle) \times L^1(\Om)\,.
	\end{aligned}
\end{equation}
For $\var \in (0,1]$
let us consider the corresponding families of 
functionals 
$$\widehat\ATtwoeps\,,\
\ATtwoeps\colon L^1(\Omega;\mathbb{S}^1)\times L^1(\Omega)\to[0,+\infty]$$ 
given by
\begin{equation}\label{AT1}
	\widehat\ATtwoeps(u,v):=
	\begin{cases}
		{\rm F}_\var(u,v)& \text{ if }(u,v)\in \domATgrande\,,\\[1em]
		+\infty& \text{ otherwise in }L^1(\Om; \circle) \times L^1(\Om)\,,
	\end{cases}
\end{equation}
\begin{equation}\label{AT2}
	\ATtwoeps(u,v):=
	\begin{cases}
		{\rm F}_\var(u,v)& \text{ if }(u,v)\in \domATpiccolo\,,\\[1em]
		+\infty& \text{ otherwise in }L^1(\Om; \circle) \times L^1(\Om)\,.
	\end{cases}
\end{equation}

The two main results of the following paper read as follows.
\begin{theorem}[$\Gamma$-convergence of 
$\widehat\ATtwoeps$]\label{thm1}
We have 
	$$
	\Gamma-L^1 \lim_{\var\to 0^+} \widehat\ATtwoeps = {\rm F}^{\mathbb S^1},
	$$
	where
	${\rm F}^{\mathbb S^1}
	\colon L^1(\Omega;\mathbb{S}^1)\times L^1(\Omega)\to[0,+\infty]$ is 
given by
	\begin{equation}\label{F}
		{\rm F}^{\mathbb S^1}(u,v):=
		\displaystyle	\int_\Omega f(|\nabla u|)\dx+ |D^cu|(\Omega)+
		\int_{S_u}g(|u^+-u^-|)\,{\rm d}\Hausdorff
	\end{equation}
	if $u\in BV(\Omega;\mathbb{S}^1)$, $v=1$  a.e.,
	and $+\infty$ otherwise in $L^1(\Om; \circle) \times L^1(\Om)$, with $g$ as in \eqref{def:g}.
\end{theorem}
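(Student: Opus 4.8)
The plan is to prove the $\Gamma$-convergence $\Gamma\text{-}L^1\lim_{\var\to0^+}\widehat\ATtwoeps={\rm F}^{\circle}$ by reducing the problem to the unconstrained linear-growth result of Theorem~\ref{thm:Linear-Ambrosio-Tortorelli} via lifting theory, exploiting the fact that on the larger domain $\domATgrande$ we are allowed to use $W^{1,1}$-maps $u$ and hence arbitrary (degree-zero, since $\Omega$ is simply connected) liftings in $W^{1,1}(\Omega)$. We must establish two inequalities.

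\textbf{Lower bound ($\Gamma$-liminf).} Let $(u_\var,v_\var)\to(u,v)$ in $L^1(\Om;\circle)\times L^1(\Om)$ with $\liminf_\var\widehat\ATtwoeps(u_\var,v_\var)<+\infty$. Along a subsequence achieving the liminf we may assume $(u_\var,v_\var)\in\domATgrande$ and the energy is bounded; standard Modica--Mortola arguments force $v_\var\to1$ in $L^1$, so $v=1$ a.e. For each $\var$, since $u_\var\in W^{1,1}(\Omega;\circle)$ and $\Omega$ is simply connected with Lipschitz boundary, we may lift: pick $\varphi_\var\in W^{1,1}(\Omega)$ with $e^{i\varphi_\var}=u_\var$. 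Because $|\nabla u_\var|=|\nabla\varphi_\var|$ a.e., we get ${\rm F}_\var(u_\var,v_\var)=\ATeps(\varphi_\var,v_\var)$ with $\ATeps$ the scalar functional of \eqref{AT-lin} (and, crucially, Remark~\ref{rem:decoupling}(i), which allows $\varphi_\var\in W^{1,1}$ rather than $W^{1,2}$). We then need a uniform $L^1$ bound on (a good representative of) $\varphi_\var$ in order to extract an $L^1$-limit $\varphi\in GBV(\Omega)$; here I would truncate, using Theorem~\ref{thm:dav-ign}-type bounds or a normalization $0\le\int_\Omega\varphi_\var\le 2\pi|\Omega|$ together with the energy bound controlling $|D\varphi_\var|$ up to the truncating function $g$, and pass to a subsequence so $\varphi_\var\to\varphi$ in $L^1$. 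Applying the scalar lower bound (Theorem~\ref{thm:Linear-Ambrosio-Tortorelli} with $m=1$, via Remark~\ref{rem:decoupling}(i)) gives
\begin{equation*}
\liminf_{\var\to0^+}{\rm F}_\var(u_\var,v_\var)\ge\int_\Omega f(|\nabla\varphi|)\dx+|D^c\varphi|(\Omega)+\int_{S_\varphi}g(|\varphi^+-\varphi^-|)\,{\rm d}\Hausdorff.
\end{equation*}
Finally $u=e^{i\varphi}$, so $u\in BV(\Omega;\circle)$, $|\nabla u|=|\nabla\varphi|$, $|D^cu|(\Omega)=|D^c\varphi|(\Omega)$, $S_u=S_\varphi^f\subseteq S_\varphi$, and on $S_u$ one has $|u^+-u^-|\le|\varphi^+-\varphi^-|\pmod{2\pi}$ with $g$ increasing; since $g\ge0$, discarding the jumps in $S_\varphi^I$ and using monotonicity of $g$ on $S_u$ yields $\int_{S_\varphi}g(|\varphi^+-\varphi^-|)\,{\rm d}\Hausdorff\ge\int_{S_u}g(|u^+-u^-|)\,{\rm d}\Hausdorff$, whence the full lower bound ${\rm F}^{\circle}(u,v)$.

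\textbf{Upper bound ($\Gamma$-limsup).} Given $u\in BV(\Omega;\circle)$ (and $v=1$), we want a recovery sequence $(u_\var,v_\var)\in\domATgrande$. First, by Theorem~\ref{thm:dav-ign} pick a lifting $\varphi\in BV(\Omega)$ with $\|\varphi\|_{L^\infty}\le2\pi$ and $|D\varphi|\le 2|Du|$; since $\varphi$ is bounded it lies in $GBV\cap L^\infty=BV\cap L^\infty$. Now apply the upper bound half of Theorem~\ref{thm:Linear-Ambrosio-Tortorelli} to the scalar function $\varphi$: there exist $(\varphi_\var,v_\var)\in\domAT$ (so $\varphi_\var\in W^{1,2}(\Omega)\subset W^{1,1}(\Omega)$) with $\varphi_\var\to\varphi$ in $L^1$, $v_\var\to1$, and
\begin{equation*}
\limsup_{\var\to0^+}\ATeps(\varphi_\var,v_\var)\le\int_\Omega f(|\nabla\varphi|)\dx+|D^c\varphi|(\Omega)+\int_{S_\varphi}g(|\varphi^+-\varphi^-|)\,{\rm d}\Hausdorff.
\end{equation*}
Set $u_\var:=e^{i\varphi_\var}\in W^{1,2}(\Omega;\circle)\subset W^{1,1}(\Omega;\circle)$, so $(u_\var,v_\var)\in\domATpiccolo\subset\domATgrande$, and $u_\var\to e^{i\varphi}=u$ in $L^1$ (Lipschitz continuity of $z\mapsto e^{iz}$). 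Since $|\nabla u_\var|=|\nabla\varphi_\var|$, we get $\widehat\ATtwoeps(u_\var,v_\var)={\rm F}_\var(u_\var,v_\var)=\ATeps(\varphi_\var,v_\var)$, and hence $\limsup_\var\widehat\ATtwoeps(u_\var,v_\var)$ is bounded by the right-hand side above. It remains to choose $\varphi$ so that this right-hand side equals ${\rm F}^{\circle}(u,v)$: we must take $\varphi$ to be (close to) the infimum in the minimization defining what should be $\int_{S_u}g$. But on the larger domain $\domATgrande$ we expect the jump cost to be $\int_{S_u}g(|u^+-u^-|)$ rather than $m_g[u]$, so the natural $\varphi$ to pick is one with $S_\varphi=S_u$ and $|\varphi^+-\varphi^-|$ equal to the geodesic distance on $\circle$ between $u^-$ and $u^+$ (i.e., in $(-\pi,\pi]$); such a lifting exists for $u\in BV(\Omega;\circle)$ (one can build it by the Davila--Ignat construction or by choosing branches appropriately) and for it $|D^c\varphi|(\Omega)=|D^cu|(\Omega)$, $|\nabla\varphi|=|\nabla u|$, and $\int_{S_\varphi}g(|\varphi^+-\varphi^-|)\,{\rm d}\Hausdorff=\int_{S_u}g(|u^+-u^-|)\,{\rm d}\Hausdorff$. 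This gives $\limsup_\var\widehat\ATtwoeps(u_\var,v_\var)\le{\rm F}^{\circle}(u,v)$, completing the proof.

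\textbf{Main obstacle.} The delicate point is the compactness step in the lower bound: extracting an $L^1$-convergent subsequence of liftings $\varphi_\var$ of $u_\var$ from the sole energy bound. The energy controls $|\nabla\varphi_\var|$ and, through the Modica--Mortola profile, the jump part only weighted by $g$, which is bounded; truncation is needed and one must ensure the truncated functions still converge to a lifting of $u$ and that no mass is lost in the limit. This is exactly where the slicing/localization compactness machinery (Theorems~\ref{thm:compactness} and~\ref{teo:compactness}, and Lemma~\ref{lem:localisation-lemma}) is invoked; in particular, to get the lower bound \emph{localized} on open sets $A$ — which one needs to glue via the localization lemma and obtain the sharp constant — one uses Remark~\ref{rem:decoupling}(ii). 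A secondary subtlety is verifying, in the upper bound, that a lifting realizing the pointwise geodesic jump actually exists in $BV(\Omega;\circle)$ with the claimed Cantor and absolutely continuous parts; this follows from the structure of $BV$ circle-valued maps and the chain rule, but should be stated carefully.
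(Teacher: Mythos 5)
The proof of Theorem~\ref{thm1} is omitted in the paper (it is reduced to an adaptation of \cite[Theorem 1.1]{BMS25}), so I cannot compare directly; but your proposal contains a genuine and fatal gap in the lower bound, plus a quantitative inaccuracy in the upper bound.

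\textbf{The lower bound breaks at the very first step.} You write ``since $u_\var\in W^{1,1}(\Omega;\circle)$ and $\Omega$ is simply connected with Lipschitz boundary, we may lift: pick $\varphi_\var\in W^{1,1}(\Omega)$ with $e^{i\varphi_\var}=u_\var$.'' This is false, and the paper itself recalls the counterexample in Section~\ref{subsec:liftings_of_S1_maps}: by \cite[Pages 17--19]{Brezis-Mironescu}, the vortex map $u_V(x)=x/|x|$ lies in $W^{1,1}(B_1;\circle)$ for $B_1\subset\R^2$ (simply connected, smooth boundary) but admits \emph{no} lifting in $W^{1,1}(B_1)$. Since $\int_{B_1}f(|\nabla u_V|)\dx<+\infty$ (because $f$ has linear growth), such maps do appear along energy-bounded sequences for $\widehat\ATtwoeps$, so the obstruction cannot be avoided. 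Consequently, the identity ${\rm F}_\var(u_\var,v_\var)=\ATeps(\varphi_\var,v_\var)$ with $\varphi_\var\in W^{1,1}(\Omega)$, on which your entire lower-bound argument depends, is unavailable: the best one can guarantee is a $BV$ lifting (Theorem~\ref{thm:dav-ign}), which lies outside the domain of $\ATeps$, whence $\ATeps(\varphi_\var,v_\var)=+\infty$ and the inequality becomes vacuous. The clean way to get the $\Gamma$-liminf here is to bypass liftings entirely: view $u_\var\in W^{1,1}(\Omega;\circle)\subset W^{1,1}(\Omega;\R^2)$, observe that $\widehat\ATtwoeps(u_\var,v_\var)=\ATeps(u_\var,v_\var)$ with the vector-valued functional of \eqref{AT-lin} for $m=2$ (via Remark~\ref{rem:decoupling}(i) to pass from $W^{1,2}$ to $W^{1,1}$), and apply the lower-bound half of Theorem~\ref{thm:Linear-Ambrosio-Tortorelli} directly, obtaining $\liminf_{\var}\widehat\ATtwoeps(u_\var,v_\var)\ge \MS(u,1)={\rm F}^{\circle}(u,1)$, with $u\in BV(\Omega;\circle)$ because $u\in GBV(\Omega;\R^2)\cap L^\infty$ and $|u|=1$ a.e. No compactness machinery for liftings is needed.

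\textbf{A second inaccuracy sits in your upper bound.} You construct a ``geodesic'' lifting $\varphi$ of $u$ with $|\jump{\varphi}|$ equal to the arclength distance $\theta\in(0,\pi]$ between $u^-$ and $u^+$ on $\circle$, and then assert $\int_{S_\varphi}g(|\varphi^+-\varphi^-|)\,{\rm d}\Hausdorff=\int_{S_u}g(|u^+-u^-|)\,{\rm d}\Hausdorff$. But $|\varphi^+-\varphi^-|=\theta$ whereas $|u^+-u^-|=2\sin(\theta/2)<\theta$ for $\theta\in(0,\pi]$, and $g$ is strictly increasing near the origin (Remark~\ref{rem:decoupling}(iii)-$4$), so the two integrands differ. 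Thus your recovery sequence actually gives $\limsup_\var\widehat\ATtwoeps(u_\var,v_\var)\le\int_\Omega f(|\nabla u|)\dx+|D^cu|(\Omega)+\int_{S_u}g(\theta)\,{\rm d}\Hausdorff$, which is in general strictly larger than \eqref{F}. You need to either reconcile the normal-distance convention in \eqref{F} or produce a sharper construction; as written, the two halves of your argument are not matched.
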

\begin{theorem}[$\Gamma$-convergence of $\ATtwoeps$]\label{thm2}
	{ Provided $\Omega$ is also  
simply-connected,} we have 
	$$
	\Gamma-L^1 \lim_{\var\to 0^+} \ATtwoeps = \MSlift,
	$$
	where
	$\MSlift
	\colon L^1(\Omega;\mathbb{S}^1)\times L^1(\Omega)\to[0,+\infty]$ is 
given by
	\begin{equation}\label{MSlif}
		\MSlift(u,v):=
		\displaystyle	\int_\Omega f(|\nabla u|)\dx+ |D^cu|(\Omega)+
		m_g[u]
	\end{equation}
	{if} $u\in BV(\Omega;\mathbb{S}^1)$, $v=1$ { a.e.}, 
	and 
	$+\infty$ {otherwise~in}
	$L^1(\Om; \circle) \times L^1(\Om)$.
{In addition, $m_g[u]$ is a  minimum.}

\end{theorem}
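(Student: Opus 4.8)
The plan is to split the statement into the $\Gamma$-convergence result and the existence of a minimizer for $m_g[u]$, and to reduce the former to Theorem \ref{thm1} combined with the lifting compactness results (Theorems \ref{thm:compactness} and \ref{teo:compactness}) stated in Section \ref{sec:existence_of_a_lifting_of_minimal_jump}. For the $\liminf$ inequality, I would take $(u_\var,v_\var)\to(u,v)$ in $L^1$ with $\sup_\var\ATtwoeps(u_\var,v_\var)<+\infty$; since $\ATtwoeps\ge\widehat\ATtwoeps$, Theorem \ref{thm1} already forces $v=1$ a.e., $u\in BV(\Omega;\mathbb S^1)$, and gives the lower bound by the local (non-lifted) functional ${\rm F}^{\mathbb S^1}$. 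To upgrade the surface term from $\int_{S_u}g(|u^+-u^-|)\,{\rm d}\Hausdorff$ to $m_g[u]$, I would use that $u_\var\in W^{1,2}(\Omega;\mathbb S^1)$ has a lifting $\varphi_\var\in W^{1,2}(\Omega)$ (here simple connectedness of $\Omega$ is essential, via the Bethuel--Zheng/Bourgain--Brezis--Mironescu lifting result recalled in Section \ref{subsec:liftings_of_S1_maps}), apply the localized lower-bound statement in Remark \ref{rem:decoupling}(ii) to the \emph{scalar} functionals $\ATeps(\varphi_\var,v_\var,\cdot)$, and invoke the compactness theorem for liftings to extract a subsequence $\varphi_\var\to\varphi$ in $L^1$ with $\varphi\in GBV(\Omega)$ and $e^{i\varphi}=u$. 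Then Theorem \ref{thm:Linear-Ambrosio-Tortorelli} applied to $(\varphi_\var,v_\var)$ yields
$$
\liminf_{\var\to0^+}\ATtwoeps(u_\var,v_\var)\ \ge\ \int_\Omega f(|\nabla\varphi|)\dx+|D^c\varphi|(\Omega)+\int_{S_\varphi}g(|\varphi^+-\varphi^-|)\,{\rm d}\Hausdorff\,,
$$
and since $|\nabla\varphi|=|\nabla u|$, $|D^c\varphi|(\Omega)=|D^cu|(\Omega)$, and $\varphi$ is an admissible competitor in \eqref{limit2}, the right-hand side is bounded below by $\MSlift(u,1)$.

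For the $\limsup$ inequality, fix $u\in BV(\Omega;\mathbb S^1)$ with $\MSlift(u,1)<+\infty$. By the existence part (proved below), pick an optimal lifting $\varphi\in GBV(\Omega)$ realizing $m_g[u]$. Apply the recovery-sequence construction of Theorem \ref{thm:Linear-Ambrosio-Tortorelli} (in the scalar case $m=1$) to $\varphi$ to obtain $(\varphi_\var,v_\var)\to(\varphi,1)$ in $L^1$ with $\varphi_\var\in W^{1,2}(\Omega)$, $v_\var\in W^{1,2}(\Omega)$, $0\le v_\var\le1$, and
$$
\limsup_{\var\to0^+}\int_\Omega\Big(\psi(v_\var)f(|\nabla\varphi_\var|)+\var|\nabla v_\var|^2+\tfrac{W(v_\var)}{\var}\Big)\dx\ \le\ \int_\Omega f(|\nabla\varphi|)\dx+|D^c\varphi|(\Omega)+\int_{S_\varphi}g(|\varphi^+-\varphi^-|)\,{\rm d}\Hausdorff\,.
$$
Set $u_\var:=e^{i\varphi_\var}\in W^{1,2}(\Omega;\mathbb S^1)$, so $(u_\var,v_\var)\in\domATpiccolo$ and $|\nabla u_\var|=|\nabla\varphi_\var|$ a.e., whence ${\rm F}_\var(u_\var,v_\var)={\rm F}_\var(\varphi_\var,v_\var)$. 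One has to check $u_\var\to u$ in $L^1$: this follows because $\varphi_\var\to\varphi$ in $L^1$, hence (up to a subsequence) a.e., so $e^{i\varphi_\var}\to e^{i\varphi}=u$ a.e., and dominated convergence applies since $|u_\var|=1$. Thus $\limsup_\var\ATtwoeps(u_\var,v_\var)\le\int_\Omega f(|\nabla u|)\dx+|D^cu|(\Omega)+m_g[u]=\MSlift(u,1)$, and by a standard diagonalization over a countable dense set the full recovery inequality follows for all $(u,v)$ (for $v\not\equiv1$ or $u\notin BV(\Omega;\mathbb S^1)$ the limit functional is $+\infty$ and there is nothing to prove, using that $\widehat\ATtwoeps\le\ATtwoeps$ together with Theorem \ref{thm1} to identify the effective domain).

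For the existence of a minimizer of $m_g[u]$: take a minimizing sequence $\varphi_k\in GBV(\Omega)$ with $e^{i\varphi_k}=u$ and $\int_{S_{\varphi_k}}g(|\varphi_k^+-\varphi_k^-|)\,{\rm d}\Hausdorff\to m_g[u]$. The subtlety is that this bound alone does not give $BV$-compactness, since $g$ is bounded (Remark \ref{rem:decoupling}(iii), property 1) so it controls neither the number of large jumps nor the $L^\infty$-norm. I would first normalize: replace $\varphi_k$ by $\varphi_k-2\pi m_k$ for suitable $m_k\in\mathbb Z$ (piecewise-constant on a Caccioppoli partition, following the Davila--Ignat-type normalization in Section \ref{subsec:liftings_of_S1_maps}) so as to keep a fixed representative, and control $|D\varphi_k|$ in terms of $|Du|$ plus the jump energy; then apply the compactness result for liftings, Theorem \ref{teo:compactness}, which is designed precisely to pass to the limit $\varphi_k\to\varphi\in GBV(\Omega)$ with $e^{i\varphi}=u$ under an energy bound of this type. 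Lower semicontinuity of $\varphi\mapsto\int_{S_\varphi}g(|\varphi^+-\varphi^-|)\,{\rm d}\Hausdorff$ along this convergence — which is the heart of the matter and which I expect to be the main obstacle — follows by slicing: for $\Hausdorff$-a.e.\ one-dimensional section the jump energy is lower semicontinuous by the subadditivity and monotonicity of $g$ (Remark \ref{rem:decoupling}(iii), properties 1 and 2), and one recovers the $(n-1)$-dimensional statement by the localization Lemma \ref{lem:localisation-lemma} applied to the superadditive set function $A\mapsto\liminf_k\int_{S_{\varphi_k}\cap A}g(|\varphi_k^+-\varphi_k^-|)\,{\rm d}\Hausdorff$. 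This gives $\int_{S_\varphi}g(|\varphi^+-\varphi^-|)\,{\rm d}\Hausdorff\le m_g[u]$, and since $\varphi$ is admissible the reverse inequality is trivial, so $\varphi$ is a minimizer and $m_g[u]$ is attained.
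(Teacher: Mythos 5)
Your upper bound argument and the outline for existence of a minimizer are essentially the paper's route. The gap is in the lower bound, and it is substantive.

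First, you claim to invoke the lifting compactness to extract $\varphi_\var\to\varphi$ in $L^1$ and then to apply Theorem~\ref{thm:Linear-Ambrosio-Tortorelli} directly to $(\varphi_\var,v_\var)$. But Theorems~\ref{thm:compactness} and \ref{teo:compactness} do not give $L^1$ convergence: they give \emph{local convergence modulo $2\pi$}, i.e.\ there are a Caccioppoli partition $\{E_i\}$ and integers $d_k^{(i)}$ with $\varphi_k-2\pi d_k^{(i)}\to\varphi_\infty$ a.e.\ on $E_i$ and $|\varphi_k-2\pi d_k^{(i)}|\to+\infty$ a.e.\ off $E_i$. In particular $\varphi_\var$ itself typically has no $L^1$ limit, and the scalar $\Gamma$-liminf of Theorem~\ref{thm:Linear-Ambrosio-Tortorelli} cannot be applied to $(\varphi_\var,v_\var)$. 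The paper circumvents this by introducing, for each $i$ and each truncation level $K$, the auxiliary sequence $\varphi_k^{i,K}=((\varphi_k-2\pi d_k^{(i)})\wedge K)\vee(-K)$, which \emph{does} converge in $L^1$; it applies the localised lower bound of Remark~\ref{rem:decoupling}(ii) on arbitrary open sets $A$ to these truncations, and then uses the localisation Lemma~\ref{lem:localisation-lemma} with a suitable measure $\lambda$ and the functions $h^{i,K}$ to recover the full lower bound with $\varphi_\infty$.

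Second, and more basically, you cannot even reach the point of applying Theorem~\ref{teo:compactness}: its hypothesis \eqref{eq:dododo} requires $u_k\stackrel{*}{\rightharpoonup}u$ in $BV(\Omega;\mathbb S^1)$, not just $L^1$ convergence. The energy bound alone does not give a uniform $BV$ bound on $u_k$ because $\psi(v_k)$ degenerates where $v_k$ is small, so $\int_\Omega\psi(v_k)f(|\nabla u_k|)\dx\le C$ does not control $\int_\Omega|\nabla u_k|\dx$. This is the ``genuine interaction'' between bulk and phase-field terms mentioned in the introduction, and it is what the paper's coarea/mean-value argument is for: choosing a good level $t(k)\in(\eta',\eta'')$, setting $F_k^{t(k)}:=\{v_k\le t(k)\}$ with $\Hausdorff(\partial^*F_k^{t(k)})$ bounded, defining $\phi_k:=\varphi_k\chi_{\Omega\setminus F_k^{t(k)}}\in SBV^2(\Omega)$ and $\overline u_k:=e^{i\phi_k}$, one shows $\overline u_k$ is uniformly bounded in $BV$ and weakly* converges to $u$, and then Theorem~\ref{teo:compactness} applies to $(\phi_k)$. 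Both the truncation in the phase-field variable and the truncation $\varphi_k^{i,K}$ in the lifting variable are needed; your sketch omits both.

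On the existence of a minimizer: the paper's Corollary~\ref{cor:existence-minimizer} uses Theorem~\ref{thm:compactness} (fixed target $u$, which is the case here) rather than Theorem~\ref{teo:compactness}; the equi-bound on $\Hausdorff(S_{\varphi_k}^\sigma)$ comes from property~6 of Remark~\ref{rem:decoupling}(iii) ($g(z)\ge C_\sigma$ for $z\ge\sigma$), not from a $BV$-normalization; and lower semicontinuity is obtained by feeding the truncations $\varphi_k^{i,N}$ into the $\Gamma$-liminf of Theorem~\ref{thm:Linear-Ambrosio-Tortorelli} (with $f(t)=t$), together with Lemma~\ref{lem:localisation-lemma} and an explicit account of the energy concentrating on $\Sigma=\cup_i\partial^*E_i$, where it is bounded by $2c_W(0)=\sup g$. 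Your slicing-based alternative for lower semicontinuity is plausible but would have to be carried out carefully: along local-mod-$2\pi$ convergence the jumps can drift to infinity across $\partial^*E_i$, and a naive one-dimensional semicontinuity statement does not by itself account for that concentration.
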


 \section{On $g$-minimizing liftings}\label{sec:existence_of_a_lifting_of_minimal_jump}
In this section we establish 
the existence of solutions to the minimization problem \eqref{limit2}, for $g$ as in \eqref{def:g} and $u\in BV(\Omega;\mathbb{S}^1)$, as stated in Corollary \ref{cor:existence-minimizer}.
We first note that the admissible set in \eqref{limit2} is nonempty, thanks to Theorem \ref{thm:dav-ign}.
As a further outcome of our analysis, we derive compactness results for sequences of liftings (see Theorems \ref{thm:compactness} and \ref{teo:compactness}). In particular, Theorem \ref{teo:compactness} will play a key role in establishing the lower bound in Theorem \ref{thm2}.
For later use, we introduce the following notion of convergence.
\begin{definition}[\textbf{Local convergence modulo $2\pi$}]\label{def:convergence} 
	 Let $(\varphi_k)_{k\ge1}\subset GBV(\Omega)$ and $\varphi_\infty\in GBV(\Omega)$. We say that $(\varphi_k)$ converges   locally modulo $ 2\pi$ to $\varphi_\infty$ if the following holds: 
	there exist a Caccioppoli partition $\{E_i\}_{i\ge1}$ of $\Omega$
	and sequences $(d_k^{(i)})_{k\ge1}\subset\mathbb Z$  for $i\in \mathbb{N}$ such that 
		\begin{equation}\label{eq:compactness}
		\begin{split}
			& \lim_{k \to +\infty} 
			(\varphi_k(x)-2\pi d^{(i)}_k) = \varphi_\infty(x)\qquad 
			\forall i \in \NN, ~ 
			\text{ for a.e. }x\in E_{i}\,,\\
			&\lim_{k \to +\infty} 
			|\varphi_k(x)-2\pi d^{(i)}_k| = +\infty\qquad \forall i \in \NN, ~ 
			\text{ for a.e. }x\in \Om\setminus E_{i}\,.
		\end{split}
	\end{equation}
\end{definition}
\begin{theorem}[\textbf{Compactness}]\label{thm:compactness} { Let $0\le\sigma\le\pi$.}
Let $u\in BV(\Om;\mathbb S^1)$ and  
$(\varphi_k)_{k\ge1}\subset GBV(\Om)$ be a  
sequence of liftings of $u$ with 
\begin{equation}\label{equi-bound-jump}
\constcompu:= \sup_{k\in\mathbb N}\Hausdorff(S_{\varphi_k}^{\sigma})
<+\infty\,,
\end{equation}
where 
$$
S_{\varphi_k}^\sigma:=\{x\in S_{\varphi_k}\colon 
|\jump{\varphi_k(x)}|\ge\sigma\}.
$$
Then 
there exist
a not-relabelled subsequence of indices $k$ and a lifting $\varphi_\infty\in GBV(\Om)$ of $u$ in $\Omega$ such that $(\varphi_k)$ 
converges  locally modulo $2\pi$ to $\varphi_\infty$.
\end{theorem}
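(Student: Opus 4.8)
The plan is to first reduce to a bounded lifting, then extract a converging subsequence on a suitable Caccioppoli partition by a diagonal/truncation argument, and finally check that the limit is a lifting of $u$ and lies in $GBV$. First I would note that, since $\varphi_k$ is a lifting of $u$, we have $S_{\varphi_k}^f = S_u$ and $|\nabla u| = |\nabla \varphi_k|$ a.e., $|D^c u|(\Omega) = |D^c\varphi_k|(\Omega)$; in particular the ``non-integer-jump'' part of $|D\varphi_k|$ is controlled by $|Du|$, which is a fixed finite measure. The only part of $D\varphi_k$ that is not a priori equibounded is the integer-jump part $\jump{\varphi_k}\mathcal H^{n-1}\res S_{\varphi_k}^I$. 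The hypothesis \eqref{equi-bound-jump} says that on the set where the jump has size $\ge\sigma$ the surface measure is equibounded; since $S_{\varphi_k}^I$ consists of points with $|\jump{\varphi_k}|\in 2\pi\mathbb Z\setminus\{0\}$, hence $|\jump{\varphi_k}|\ge 2\pi\ge\sigma$, we get $\mathcal H^{n-1}(S_{\varphi_k}^I)\le H$ uniformly. However the jump \emph{heights} on $S_{\varphi_k}^I$ may still be unbounded, which is exactly the reason the statement allows for the integer shifts $d_k^{(i)}$ and the ``escaping to infinity'' set $\Omega\setminus E_i$.

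The key step is to build the Caccioppoli partition. I would proceed through the auxiliary bounded maps: by Theorem \ref{thm:dav-ign} (or rather by composing with the $2\pi$-periodic ``sawtooth'' — but more robustly) consider the $\mathbb S^1$-valued, hence bounded, object is already $u$ itself; the difficulty is organizing the \emph{unbounded} real liftings. The natural device is to look at the functions $w_k := T\circ\varphi_k$ where $T(t)=\arctan$ or, better, to use the stratification of $\Omega$ according to integer level sets. Concretely: since $\mathcal H^{n-1}(S_{\varphi_k}^I)\le H$ and the fractional jump set is the fixed set $S_u$ with $\mathcal H^{n-1}(S_u)<\infty$, the sets $\{\varphi_k \in [2\pi m, 2\pi(m+1))\}$ form, for each $k$, a Caccioppoli partition of $\Omega$ with total perimeter controlled by $|Du|(\Omega)+2\mathcal H^{n-1}(S_{\varphi_k}^I)+(\text{boundary terms})$, uniformly in $k$. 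By the compactness theorem for Caccioppoli partitions with equibounded total perimeter, after passing to a subsequence these partitions converge, and one can extract a common refinement which is a fixed Caccioppoli partition $\{E_i\}_{i\ge 1}$ of $\Omega$ such that on each $E_i$ there is an integer sequence $d_k^{(i)}$ for which $\varphi_k - 2\pi d_k^{(i)}$ stays in a fixed bounded window $[-C,C]$ (after possibly discarding the indices $k$ where $E_i$ is not yet ``captured''). On $E_i$ the functions $\varphi_k-2\pi d_k^{(i)}$ are then equibounded in $L^\infty$ and equibounded in $BV(E_i)$ (their total variation on $E_i$ is bounded by $|Du|(E_i)$ plus the controlled surface terms), so by $BV$ compactness they converge in $L^1_{\rm loc}(E_i)$, and a diagonal argument over $i$ produces a single subsequence working on all $E_i$ simultaneously and a limit $\varphi_\infty$ on $\Omega$. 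The first line of \eqref{eq:compactness} then holds by construction; the second line holds because on $\Omega\setminus E_i$ the relevant integer shift differs, and the local convergence on the other pieces forces $|\varphi_k-2\pi d_k^{(i)}|\to+\infty$ there.

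It remains to check that $\varphi_\infty$ is a lifting of $u$ and that $\varphi_\infty\in GBV(\Omega)$. For the lifting property: on each $E_i$, $e^{i(\varphi_k-2\pi d_k^{(i)})} = e^{i\varphi_k} = u$ a.e., and passing to the a.e.\ limit along the subsequence gives $e^{i\varphi_\infty}=u$ a.e.\ on $E_i$; since $\{E_i\}$ covers $\Omega$, this holds a.e.\ on $\Omega$. For $\varphi_\infty\in GBV(\Omega)$: for any $M>0$, the truncations $(\varphi_\infty\wedge M)\vee(-M)$ should be $BV_{\rm loc}$; one obtains this from lower semicontinuity of total variation under $L^1_{\rm loc}$ convergence applied on each $E_i$, together with the uniform $BV$ bounds on $\varphi_k-2\pi d_k^{(i)}$ restricted to $E_i$ and a gluing estimate across the interfaces $\partial^* E_i$, whose total $\mathcal H^{n-1}$-measure is finite. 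The main obstacle I expect is precisely this last gluing/organization: making the Caccioppoli partition genuinely $k$-independent while keeping the shifts $d_k^{(i)}$ consistent and controlling the total variation of the glued limit across countably many interfaces. This is where the localisation Lemma \ref{lem:localisation-lemma} and careful bookkeeping of perimeter bounds (rather than any new geometric input) do the work, and where the restriction $0\le\sigma\le\pi$ is used to guarantee $S_{\varphi_k}^I\subseteq S_{\varphi_k}^\sigma$ so that \eqref{equi-bound-jump} indeed controls all the integer jumps.
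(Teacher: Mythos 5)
Your proposal takes a genuinely different route from the paper's. The paper does not go through level sets of $\varphi_k$; instead it proves a quantitative \emph{localized compactness} lemma (Lemma~\ref{lem:passo_base}) by slicing (following Almi--Tasso \cite{AT} and Chambolle--Crismale \cite{CC}) that extracts, from any finite perimeter set $F$, a single ``good'' finite perimeter subset $E\subseteq F$ of measure bounded below by an explicit function of $|F|$, $\constcompu$ and $P(F,\Omega)$, on which a single shift $2\pi d_k$ makes $\varphi_k-2\pi d_k$ converge, while $|\varphi_k-2\pi d_k|\to\infty$ off $E$. It then iterates this on the complement $F_{N+1}=\Omega\setminus\bigcup_{i\le N}E_i$, using Lemma~\ref{lem:ricoprimenti} to control $\Hausdorff(\partial^*F_{N+1})$, and the explicit measure lower bound is exactly what forces $|F_N|\to 0$, i.e.\ exhaustion of $\Omega$; a diagonal subsequence and a truncation argument then give $\varphi_\infty\in GBV$.

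Your plan, by contrast, relies on the level-set partitions $\{\varphi_k\in[2\pi m,2\pi(m+1))\}_m$ being Caccioppoli partitions with perimeter bounded \emph{uniformly in $k$} by $|Du|(\Omega)+2\Hausdorff(S_{\varphi_k}^I)+(\text{bdry})$. This is the main gap: the jump part of $D\varphi_k$ does contribute a bounded amount (since $\sigma\le\pi$ forces $S_{\varphi_k}^I\subseteq S_{\varphi_k}^\sigma$, a point you correctly identify), but the absolutely continuous and Cantor parts are controlled only through the coarea formula, which bounds $\int_{\mathbb R}P(\{\varphi_k>t\},\Omega\setminus S_{\varphi_k})\,{\rm d}t$ and hence perimeters at \emph{a.e.}~level $t$, not at the specific levels $t=2\pi m$. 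To fix this you would have to shift the levels to $2\pi m+c_k$ for some $k$-dependent generic $c_k$ chosen by Chebyshev; without such a device the partition need not have finite perimeter at all. Two further points are only gestured at: (a) the Caccioppoli-partition compactness theorem (\cite[Thm.~4.19]{AmFuPa}) reorders the partition elements, so the correspondence $i\mapsto m$ needed to define $d_k^{(i)}$ must be tracked explicitly (it can be, but the argument is not automatic); (b) the conclusion that the limit partition covers $\Omega$ up to a null set, and that $\varphi_\infty\in GBV$, need an actual estimate rather than a citation to Lemma~\ref{lem:localisation-lemma}, which is what the paper's quantitative measure lower bound in Lemma~\ref{lem:passo_base} and its truncation argument supply. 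In short: the high-level structure (Caccioppoli partition, integer shifts, truncation and $BV$ compactness) matches, and your observations on $\sigma\le\pi$ and on what \eqref{equi-bound-jump} controls are correct, but the mechanism you propose for producing the partition needs repair, whereas the paper's iterative slicing lemma sidesteps the level-set issue entirely and delivers the exhaustion quantitatively.
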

%
%
A first consequence of Theorem \ref{thm:compactness} is the following result.
\begin{cor}[\textbf{Existence of a minimizer to $m_g[u]$}]\label{cor:existence-minimizer} Let $g$ be as in \eqref{def:g}.
	Let $u\in BV(\Omega;\mathbb{S}^1)$ and let $(\varphi_k)_{k\ge1}\subset GBV(\Omega)$ be a 
sequence of liftings of $u$ in $\Omega$  with 
	\begin{equation}\label{eq:constg}
	\constg:=	\sup_{k\in\mathbb{N}} \int_{S_{\varphi_k}}g(|\varphi^+_k-\varphi^-_k|)\,{\rm d}\Hausdorff<+\infty\,.
	\end{equation}
	Then the following holds:
	\begin{enumerate}
		\item[(i)] {Compactness:}~There exist a not-relabelled subsequence of indices $k$, and a lifting
$\varphi_\infty\in GBV(\Omega)$ 
of $u$ in $\Omega$ 
		such that $(\varphi_k)$ converges locally modulo $2\pi$ to $\varphi_\infty$;
		\item[(ii)] {Lower semicontinuity:}
		\begin{equation*}
			\liminf_{k\to+\infty}\int_{S_{\varphi_k}}g(|\varphi^+_k-\varphi^-_k|)\,{\rm d}\Hausdorff\ge \int_{S_{\varphi_\infty}}g(|\varphi^+_\infty-\varphi^-_\infty|)\,{\rm d}\Hausdorff\,.
		\end{equation*}
	\end{enumerate}
	As a consequence, there exists a minimizer $\varphi\in GBV(\Omega)$ of $m_g[u]$.
\end{cor}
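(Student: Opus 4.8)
The plan is to prove Corollary~\ref{cor:existence-minimizer} by combining Theorem~\ref{thm:compactness} with a lower-semicontinuity statement for the $g$-jump energy along sequences that converge locally modulo $2\pi$, and then to extract a minimizer of $m_g[u]$ by a standard direct-method argument applied to a minimizing sequence of liftings.

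First I would deduce part (i) from Theorem~\ref{thm:compactness}. By property~6 in Remark~\ref{rem:decoupling}(iii) (``$g$ is a truncating function''), for $\sigma=\pi$ there is a constant $C_\pi>0$ with $g(z)\ge C_\pi$ for every $z\ge\pi$. Hence, since for each $k$ one has $\int_{S_{\varphi_k}}g(|\jump{\varphi_k}|)\,{\rm d}\Hausdorff\ge C_\pi\,\Hausdorff(S_{\varphi_k}^{\pi})$ (noting that on $S_{\varphi_k}^{\pi}$ the jump opening has modulus at least $\pi$ — here one should be careful that $\jump{\varphi_k}$ may exceed $2\pi$, but the relevant quantity in $g$ is $|\varphi_k^+-\varphi_k^-|$, which is exactly the argument of $g$), the bound $\constg<+\infty$ in \eqref{eq:constg} yields $\sup_k\Hausdorff(S_{\varphi_k}^{\pi})\le \constg/C_\pi<+\infty$. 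Thus the hypothesis \eqref{equi-bound-jump} of Theorem~\ref{thm:compactness} is satisfied with $\sigma=\pi$, and Theorem~\ref{thm:compactness} provides a (not relabelled) subsequence and a lifting $\varphi_\infty\in GBV(\Omega)$ of $u$ such that $(\varphi_k)$ converges locally modulo $2\pi$ to $\varphi_\infty$. This is exactly part (i).

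Next I would prove part (ii), the lower-semicontinuity inequality. The key structural fact is that local convergence modulo $2\pi$ means: on each piece $E_i$ of a fixed Caccioppoli partition, $\varphi_k-2\pi d_k^{(i)}\to\varphi_\infty$ (say in $L^1_{\rm loc}$ or a.e., and one can upgrade to an appropriate $BV$-type convergence on compact subsets using the energy bound), while on the complement the shifted functions diverge. On $E_i$, the integer shift $2\pi d_k^{(i)}$ does not change the jump set nor the jump opening, so $\int_{S_{\varphi_k}\cap E_i}g(|\jump{\varphi_k}|)\,{\rm d}\Hausdorff$ equals the same integral for $\varphi_k-2\pi d_k^{(i)}$; on the interior of $E_i$ one then applies a known lower-semicontinuity result for free-discontinuity functionals of the form $\int_{S_\phi}g(|\jump\phi|)\,{\rm d}\Hausdorff$ with $g$ subadditive and concave-like (exactly the hypotheses in Remark~\ref{rem:decoupling}(iii): $g$ increasing, subadditive, $1$-Lipschitz, $g(z)\le z$) — this is the classical Ambrosio–Braides type lsc theorem for surface integrands, and the bulk and Cantor parts contribute nonnegatively. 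Summing over $i$ and letting the partition exhaust $\Omega$, together with the fact that the jump set of $\varphi_\infty$ cannot concentrate extra mass on the (negligible) boundaries $\bigcup_i\partial^*E_i$ beyond what is already counted, one obtains
\begin{equation*}
\liminf_{k\to+\infty}\int_{S_{\varphi_k}}g(|\jump{\varphi_k}|)\,{\rm d}\Hausdorff\ \ge\ \sum_i\int_{S_{\varphi_\infty}\cap E_i}g(|\jump{\varphi_\infty}|)\,{\rm d}\Hausdorff\ \ge\ \int_{S_{\varphi_\infty}}g(|\jump{\varphi_\infty}|)\,{\rm d}\Hausdorff\,,
\end{equation*}
which is (ii). The main obstacle here is handling the interfaces $\partial^*E_i$ of the Caccioppoli partition correctly: a priori $\varphi_\infty$ could have jumps there that are picked up in the liminf through jumps of $\varphi_k$, and one must ensure no energy is lost in the gluing. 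I expect this to be dealt with by a localization/cut-off argument (using the localization Lemma~\ref{lem:localisation-lemma}) combined with the subadditivity of $g$, so that the contribution across interfaces is controlled; this is precisely the delicate point and likely the technical heart of the proof.

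Finally, the existence of a minimizer of $m_g[u]$ follows by the direct method: the admissible class in \eqref{limit2} is nonempty by Theorem~\ref{thm:dav-ign} (the Davila–Ignat lifting $\varphi\in BV(\Omega)$ has finite jump energy, since $g(z)\le z$ gives $\int_{S_\varphi}g(|\jump\varphi|)\,{\rm d}\Hausdorff\le |D\varphi|(\Omega)<+\infty$), so $m_g[u]<+\infty$. Take a minimizing sequence $(\varphi_k)\subset GBV(\Omega)$ of liftings of $u$; it satisfies \eqref{eq:constg} with $\constg$ close to $m_g[u]$. Apply parts (i) and (ii): up to a subsequence $(\varphi_k)$ converges locally modulo $2\pi$ to a lifting $\varphi_\infty\in GBV(\Omega)$ of $u$, and
\begin{equation*}
m_g[u]=\liminf_{k\to+\infty}\int_{S_{\varphi_k}}g(|\jump{\varphi_k}|)\,{\rm d}\Hausdorff\ \ge\ \int_{S_{\varphi_\infty}}g(|\jump{\varphi_\infty}|)\,{\rm d}\Hausdorff\ \ge\ m_g[u]\,,
\end{equation*}
where the last inequality holds because $\varphi_\infty$ is itself an admissible competitor. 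Hence $\varphi_\infty$ is a minimizer, completing the proof.
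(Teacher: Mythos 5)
Your part (i) and the final direct-method extraction of a minimizer coincide with the paper's argument (the paper even allows general $0<\sigma\le\pi$, but fixing $\sigma=\pi$ is fine). The issue is in part (ii), where your proposal has a real gap.

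You invoke ``a classical Ambrosio--Braides type lsc theorem for surface integrands'' on each piece $E_i$, optionally ``upgrading to $BV$-type convergence using the energy bound.'' But the bound \eqref{eq:constg} does \emph{not} give a uniform $BV$-bound on $\varphi_k-2\pi d_k^{(i)}$: because $g$ saturates at $2c_W(0)$, large jumps carry bounded $g$-energy while $|D^j\varphi_k|$ can be arbitrarily large, so $\int_{S_{\varphi_k}}|\jump{\varphi_k}|\,{\rm d}\Hausdorff$ is uncontrolled. Without such a bound the shifted sequence only converges a.e.\ on $E_i$, which is not enough to apply the classical SBV/GSBV lower-semicontinuity machinery to the jump term. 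The paper closes this precisely by truncating: it sets $\varphi_k^{i,N}:=((\varphi_k-2\pi d_k^{(i)})\wedge N)\vee(-N)$, for which $L^1$-convergence to $\varphi_\infty^{i,N}$ holds, and applies the $L^1$-lower semicontinuity of the \emph{full} localized Mumford--Shah-type functional $G(\cdot,A)=\MS(\cdot,1,A)$ with $f(t)=t$ (which is lsc because it is the $\Gamma$-limit in Theorem~\ref{thm:Linear-Ambrosio-Tortorelli} localized as in Remark~\ref{rem:decoupling}(ii)). This gives \eqref{lim-inf}. Then one must remove $N$ and reassemble over $i$: the paper defines $\mu(A):=\liminf_k G(\varphi_k,A)$ and a measure $\lambda$ equal to $G(\varphi_\infty,\cdot)$ away from $\Sigma:=\bigcup_i\partial^*E_i$ but weighted by $2c_W(0)=\sup g$ on $\Sigma$ (because the truncated jump $|\jump{\varphi_\infty^{i,N}}|\to\infty$ on $\partial^*E_i$), and then applies the localization Lemma~\ref{lem:localisation-lemma} with three families $h_1^i,h_2^i,h_3^i$ whose sup is $1$. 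Finally, since all liftings of $u$ share the same $|\nabla\cdot|$ and $|D^c\cdot|$, these parts cancel identically — they don't merely ``contribute nonnegatively'' — leaving the desired jump-energy inequality. Your proposal correctly flags the interfaces $\partial^*E_i$ as the delicate point and correctly anticipates the localization lemma, but it omits the truncation step and the passage through the full functional $G$, which is what makes the argument actually go through.
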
 
\begin{proof}
(i).
Let $0<\sigma\le\pi$ be fixed. By Remark \ref{rem:decoupling}-(iii) and 
\eqref{eq:constg}
it follows
\begin{equation*}
\constg \ge \int_{S_{\varphi_k}}g(|\varphi^+_k-\varphi^-_k|)\,{\rm d}\Hausdorff\ge C_\sigma \Hausdorff(S_{\varphi_k}^\sigma)\,.
\end{equation*}
Hence from Theorem \ref{thm:compactness} there exists a lifting $\varphi_\infty\in GBV(\Omega)$ of $u$ such that, up to a 
not-relabelled subsequence, $(\varphi_k)$ converges  locally modulo $2\pi$ to $\varphi_\infty$. More precisely, there exist a Caccioppoli 
partition $\{E_i\}_{i\ge1}$ of $\Omega$ and 
sequences $(d_k^{(i)})_{k\ge 1}\subset\mathbb{Z}$ such that \eqref{eq:compactness} holds for every $i\in \mathbb N$.
\\

(ii). We exploit the fact that $g$ 
arises from the $\Gamma$-limit of the functionals in \eqref{AT-lin}.
 Fix $N,i\in \mathbb{N}$ and set 
 $$\varphi_k^{i,N}:=((\varphi_k-2\pi d_k^{(i)})\wedge N)\vee (-N)\,.$$
Then $\varphi_k^{i,N}\to\varphi_\infty^{i,N}$ in $L^1(\Omega)$ 
with $\varphi_\infty^{i,N}:=(\varphi_\infty\wedge N)\vee(-N)$ in $E_i$ and 
$\varphi_\infty^{i,N}=\pm N$ in two suitable finite perimeter subsets 
$F^\pm_i$ of $\Omega$ with $\Omega\setminus E_i=F^+_i\cup F^-_i$.

Define 
\begin{equation*}
 G(\phi, A):= \MS(\phi,1,A)\quad \text{ for }\phi \in GBV(\Omega)\,, \ A\in \mathcal{A}(\Omega)
\end{equation*}
where $\MS(\cdot,\cdot,A)$ is  
as in  Remark \ref{rem:decoupling}-(ii) with $f(t)=t$.
 {}From Theorem \ref{thm:Linear-Ambrosio-Tortorelli} and properties of $\Gamma$-convergence it follows that $G$ is lower semicontinuous in the $L^1$-topology, so that 
\begin{equation}\label{lim-inf} 
	\liminf_{k \to +\infty}G(\varphi_k, A)\ge
\liminf_{k \to +\infty}G(\varphi_k^{i,N},A)\ge G(\varphi_\infty^{i,N},A)\,.
\end{equation}
Next we define the superadditive set function 
\begin{equation}\label{eq:muA}
\mu(A):=\liminf_{k \to +\infty}G(\varphi_k, A) \qquad \forall A \in \mathcal A(\Om),
\end{equation}
and the positive measure 
{\begin{align*}
\lambda(\Borelset):= G(\varphi_\infty, \Borelset)
=&\int_\Borelset |\nabla\varphi_\infty|\dx+ |D^c\varphi_\infty|(\Borelset)
+ \int_{(S_{\varphi_\infty}\setminus \Sigma)\cap \Borelset}g(|\varphi^+_\infty-\varphi^-_\infty|)\,{\rm d}\Hausdorff\,\\&+2c_W(0)\Hausdorff(\Sigma\cap \Borelset),
\end{align*}
}for every Borel set $\Borelset\subseteq\Omega$, 
where we have denoted 
\begin{equation}\label{eq:def_Sigma}
\Sigma:=\displaystyle \Big(\bigcup_{i= 1}^{+\infty}
 \partial^*E_i\Big) \setminus\partial\Om. 
\end{equation}
We identify $E_i$ with the set of density $1$ for $\chi_{E_i}$.
Define also, for $i\geq1$, 
{$$
h_1^{i}:=\begin{cases}
	1&\text{in }E_i \setminus (\Sigma\cup S_{\varphi_\infty})
	\\[1em]
	0&\text{otherwise in }\Omega
\end{cases}\,,
$$
and 
$$
h_2^{i}:=\begin{cases}
	1&\text{in }(S_ {\varphi_\infty}\cap E_i)\setminus \partial^*E_i
	\\[1em]
	0&\text{otherwise in }\Omega
\end{cases}\,, \quad
h_3^{i}:=\begin{cases}
	1&\text{in }  (\partial^* E_i)\setminus \partial\Om
	\\[1em]
	0&\text{otherwise in }\Omega.
\end{cases}\,.
$$
}
{By definition of local convergence modulo $2\pi$, there holds 
$$\lim_{N\rightarrow +\infty}|(\varphi_\infty^{i,N})^+ -(\varphi_\infty^{i,N})^-|= |\varphi_\infty^+ -\varphi_\infty^-| \qquad \Hausdorff\text{ a.e. in }(S_{\varphi_\infty}\cap E_i)\setminus  \partial^* E_i,$$	
and  
$$\lim_{N\rightarrow \infty}|(\varphi_\infty^{i,N})^+ -(\varphi_\infty^{i,N})^-|= +\infty \qquad \Hausdorff\text{ a.e. in }\partial^* E_i,$$
hence by monotonicity of $g$ and the monotone convergence theorem
	we arrive at
$$\lim_{N\rightarrow +\infty}\int_{(S_{\varphi_\infty^{i,N}}\setminus \partial^*E_i) \cap A}g(|(\varphi_\infty^{i,N})^+ -(\varphi_\infty^{i,N})^-|)\,{\rm d}\Hausdorff\ge \int_Ah_2^{i}{\rm d}\lambda 
$$	
and 
$$2c_W(0)\mathcal H^{n-1}(A \cap \partial^*E_i)=
\lim_{N\rightarrow \infty}\int_{A \cap \partial^*E_i}g(|(\varphi_\infty^{i,N})^+ -(\varphi_\infty^{i,N})^-|)\,{\rm d}\Hausdorff\ge \int_Ah_3^{i}{\rm d}\lambda.
$$	
Whence, by \eqref{lim-inf} we conclude
$$\mu(A)\geq \int_Ah_2^{i}{\rm d}\lambda \qquad \qquad \mu(A)\geq \int_Ah_3^{i}{\rm d}\lambda.$$
On the other hand, still by \eqref{lim-inf} it is easily seen that 
$$\mu(A)\geq \int_Ah_1^{i}{\rm d}\lambda \qquad \forall i \geq 1,$$
and so}
%
%
observing that 
\begin{equation*}
\sup_{j=1,2,3, ~i \in \mathbb N}h_j^{i}\equiv 1\,,
\end{equation*}
by invoking  Lemma \ref{lem:localisation-lemma}  we infer
\begin{equation*}
\mu(A)\ge \lambda(A) \quad \text{for all }A\in \mathcal A({\Omega})\,.
\end{equation*}
As a consequence we get 
\begin{equation}\label{liminf-G}
\liminf_{k \to +\infty}G(\varphi_k,\Omega)\ge G(\varphi_\infty,\Omega)\,.
\end{equation}
Since $\varphi_k,\varphi_\infty$ are liftings of $u$ it holds $|\nabla\varphi_k|=|\nabla \varphi_\infty|=|\nabla u|$ and $|D^c\varphi_k|(\Omega)=|D^c\varphi_\infty|(\Omega)=|D^c u|(\Omega)$, hence \eqref{liminf-G} in turn implies 
\begin{equation*}
\liminf_{k \to +\infty}\int_{S_{\varphi_k}}g(|\varphi_k^+-\varphi_k^-|)\,{\rm d}\Hausdorff \ge 
\int_{S_{\varphi_\infty}}g(|\varphi_\infty^+-\varphi_\infty^-|)\,{\rm d}\Hausdorff\,,
\end{equation*}
and the proof of (ii) is concluded.
\end{proof}
From the proof above we also obtain the following:
\begin{cor}
In the same hypotheses and notations
of Corollary \ref{cor:existence-minimizer},
we have 
	\begin{equation*}
	\liminf_{k\to+\infty}\int_{S_{\varphi_k}}g(|\varphi^+_k-\varphi^-_k|)\,{\rm d}\Hausdorff\ge \int_{S_{\varphi_\infty}\setminus \Sigma}g(|\varphi^+_\infty-\varphi^-_\infty|)\,{\rm d}\Hausdorff\,+2c_W(0)\Hausdorff(\Sigma),
\end{equation*}
where we recall $2c_W(0)=\sup g$.
\end{cor}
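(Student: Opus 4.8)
The statement is a direct corollary of the chain of inequalities already established in the proof of Corollary \ref{cor:existence-minimizer}(ii), and I would simply make the extraction explicit. Recall that there we set $G(\phi,A):=\MS(\phi,1,A)$ with $f(t)=t$, and, via the localisation Lemma \ref{lem:localisation-lemma} applied to $\mu(A)=\liminf_{k\to+\infty}G(\varphi_k,A)$ and $\lambda(B)=G(\varphi_\infty,B)$, we proved the inequality \eqref{liminf-G}, namely
\[
\liminf_{k\to+\infty}G(\varphi_k,\Omega)\ \ge\ G(\varphi_\infty,\Omega)\ =\ \lambda(\Omega).
\]

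First I would expand both sides according to the definitions. On the left, since all the $\varphi_k$ are liftings of $u$, the bulk and Cantor contributions $\int_\Omega|\nabla\varphi_k|\dx+|D^c\varphi_k|(\Omega)$ equal $\int_\Omega|\nabla u|\dx+|D^cu|(\Omega)$ and are hence constant in $k$; pulling them out of the $\liminf$ gives $\liminf_{k\to+\infty}G(\varphi_k,\Omega)=\int_\Omega|\nabla u|\dx+|D^cu|(\Omega)+\liminf_{k\to+\infty}\int_{S_{\varphi_k}}g(|\varphi_k^+-\varphi_k^-|)\,{\rm d}\Hausdorff$. On the right, by the very definition of $\lambda$ in the proof of Corollary \ref{cor:existence-minimizer}, $\lambda(\Omega)=\int_\Omega|\nabla\varphi_\infty|\dx+|D^c\varphi_\infty|(\Omega)+\int_{S_{\varphi_\infty}\setminus\Sigma}g(|\varphi_\infty^+-\varphi_\infty^-|)\,{\rm d}\Hausdorff+2c_W(0)\Hausdorff(\Sigma)$, and again $\int_\Omega|\nabla\varphi_\infty|\dx+|D^c\varphi_\infty|(\Omega)=\int_\Omega|\nabla u|\dx+|D^cu|(\Omega)$. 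Since $u\in BV(\Omega;\mathbb S^1)$ these two common terms are finite, so they may be cancelled from both sides of \eqref{liminf-G}, leaving exactly the asserted inequality. For the closing identity $2c_W(0)=\sup g$, I would invoke Remark \ref{rem:decoupling}(iii), property $1.$: $g$ is increasing with $\lim_{z\to+\infty}g(z)=2c_W(0)$, whence $\sup_{z\ge0}g(z)=2c_W(0)$.

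There is essentially no obstacle here: the only point meriting a word is that the cancellation is legitimate, which is guaranteed by $u\in BV(\Omega;\mathbb S^1)$ (so that $\int_\Omega|\nabla u|\dx+|D^cu|(\Omega)<+\infty$, cf.\ also Theorem \ref{thm:dav-ign}); the uniform bound \eqref{eq:constg} plays no further role beyond having been used already to produce $\varphi_\infty$ and the Caccioppoli partition $\{E_i\}$. Thus the proof reduces to one line: rewrite \eqref{liminf-G} and subtract the (finite, equal) volume and Cantor parts.
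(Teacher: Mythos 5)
Your extraction is correct and matches the intended derivation: the paper gives no explicit proof for this corollary (it only says ``From the proof above we also obtain the following''), and what you supply is precisely the explicit reading-off of the inequality $\mu(\Omega)\ge\lambda(\Omega)$ obtained from Lemma~\ref{lem:localisation-lemma}, followed by cancellation of the (finite, equal) bulk and Cantor contributions using that all $\varphi_k$ and $\varphi_\infty$ are liftings of $u\in BV(\Omega;\mathbb S^1)$.

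One small point of attribution worth tightening: rather than quoting \eqref{liminf-G} in the form $\liminf_k G(\varphi_k,\Omega)\ge G(\varphi_\infty,\Omega)=\lambda(\Omega)$, you should invoke the inequality $\mu(\Omega)\ge\lambda(\Omega)$ directly, since this is what Lemma~\ref{lem:localisation-lemma} actually yields (with the $h_j^i$'s) and it is the stronger statement you need. The paper's displayed identification $\lambda(\Borelset)=G(\varphi_\infty,\Borelset)$ is really only ``$\lambda\ge G(\varphi_\infty,\cdot)$'' in general — on $\Sigma\cap S_{\varphi_\infty}$, the measure $\lambda$ charges $2c_W(0)=\sup g\ge g(|\jump{\varphi_\infty}|)$, and on $\Sigma\setminus S_{\varphi_\infty}$ the functional $G(\varphi_\infty,\cdot)$ charges nothing at all. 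Passing through $G(\varphi_\infty,\Omega)$ would lose exactly the improvement the present corollary records over Corollary~\ref{cor:existence-minimizer}(ii). Since you in fact expand $\lambda(\Omega)$ explicitly and cancel against it, your argument does use the right inequality; only the phrase ``namely \eqref{liminf-G}'' misattributes it. With that clarification, the proof is complete.
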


Now we state a generalization of Theorem \ref{thm:compactness}
to a sequence $\suk$,
needed to show Theorem \ref{thm2}; the proof
will be given in the next section.

	\begin{theorem}[\textbf{Generalised compactness}]\label{teo:compactness}
Let $u,u_k\in BV(\Om;\mathbb S^1)$ 
be such that 
\begin{equation}\label{eq:dododo}
u_k\stackrel{*}{\rightharpoonup}u \quad
{\rm in~} BV(\Om;\mathbb S^1),
\end{equation}
and let $\varphi_k\in GBV(\Om)$ be a 
lifting of $u_k$ in $\Om$, for all $k\ge 1$.  
Suppose   that for some $0\le\sigma\le\pi$ 
\begin{equation}\label{equi-bound-jump-uk}
H:= \sup_{k\in\mathbb N}\Hausdorff(S_{\varphi_k}^{\sigma})
<+\infty.
\end{equation}
		Then
there exist a  not-relabelled subsequence of indices $k$ and  a lifting $\varphi_\infty\in GBV(\Om)$ of $u$ in $\Om$ such that $\varphi_k$ converges locally modulo $2\pi$ to $\varphi_\infty$.
	\end{theorem}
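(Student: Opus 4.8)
The plan is to reduce the statement to the already-established Theorem \ref{thm:compactness}, which treats the case of a \emph{fixed} target map $u$, by exploiting the freedom to modify the liftings $\varphi_k$ on small sets without affecting the jump bound \eqref{equi-bound-jump-uk}. First I would fix a lifting $\varphi\in BV(\Omega)$ of $u$ with $\|\varphi\|_{L^\infty}\le 2\pi$ and $|\varphi|_{BV}\le 2|u|_{BV}$, which exists by Theorem \ref{thm:dav-ign}. Since $u_k\stackrel{*}{\rightharpoonup}u$ in $BV(\Omega;\mathbb S^1)$, in particular $u_k\to u$ in $L^1(\Omega;\mathbb S^1)$, and (up to a not-relabelled subsequence) $u_k\to u$ pointwise a.e. in $\Omega$. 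The idea is then to build, from each $\varphi_k$, an auxiliary lifting $\widetilde\varphi_k$ \emph{of the same fixed map $u$} on a large portion of $\Omega$, so that Theorem \ref{thm:compactness} applies to $(\widetilde\varphi_k)$; finally one transfers the conclusion back to $(\varphi_k)$ using that $\varphi_k$ and $\widetilde\varphi_k$ differ, on the large portion, by an integer multiple of $2\pi$.

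Concretely, for each $k$ set $w_k:=u\,\overline{u_k}\in BV(\Omega;\mathbb S^1)$ (pointwise product of $u$ with the complex conjugate of $u_k$); then $w_k\to 1$ in $L^1$ and a.e., and $w_k=e^{i\beta_k}$ for a suitable lifting $\beta_k$ of $w_k$. One checks that $\psi_k:=\varphi_k+\beta_k$ is a lifting of $u_k\cdot w_k=u$. The subtle point is that adding $\beta_k$ may create extra jump: one must choose $\beta_k$ so that $\Hausdorff(S_{\beta_k}^{\sigma/2})$ stays controlled, using that $|w_k-1|\to0$; since $w_k\to 1$ uniformly off a set of small measure is \emph{not} automatic in $BV$, the cleaner route is to invoke the $BV$-lifting bound for $w_k$ together with a truncation/coarea argument, selecting for each $k$ a good value and discarding a set of small $\Hausdorff^{n-1}$-measure on which the jumps are large. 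The main obstacle is precisely this: controlling the jump set of the correction $\beta_k$ uniformly in $k$, because weak-$*$ $BV$ convergence of $u_k$ does \emph{not} give convergence of the jump sets $S_{u_k}$, so $S_{\psi_k}^\sigma$ could a priori blow up. I expect the correct fix is to work with the coarea formula for $\varphi_k$ (and for $\beta_k$), splitting $\Omega$ along level sets, and to use \eqref{equi-bound-jump-uk} together with lower semicontinuity of $|D^c u_k|$, $\mathcal H^{n-1}\res S_{u_k}$ type quantities under weak-$*$ convergence only in the direction needed (Fatou on the absolutely continuous and Cantor parts, which are the ones that pass to the limit).

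Once $(\psi_k)$ is a sequence of liftings of the \emph{single} map $u$ with $\sup_k\Hausdorff(S_{\psi_k}^{\sigma'})<+\infty$ for some $0<\sigma'\le\pi$, Theorem \ref{thm:compactness} yields a not-relabelled subsequence, a Caccioppoli partition $\{E_i\}$ of $\Omega$, integers $d_k^{(i)}$, and a lifting $\varphi_\infty\in GBV(\Omega)$ of $u$ with $\psi_k-2\pi d_k^{(i)}\to\varphi_\infty$ a.e. on $E_i$ and $|\psi_k-2\pi d_k^{(i)}|\to+\infty$ a.e. on $\Omega\setminus E_i$. To conclude for the original sequence, I would note $\varphi_k=\psi_k-\beta_k$ with $\beta_k\to0$ a.e. (after passing to a further subsequence), hence $\varphi_k-2\pi d_k^{(i)}=(\psi_k-2\pi d_k^{(i)})-\beta_k\to\varphi_\infty$ a.e. on $E_i$ and $|\varphi_k-2\pi d_k^{(i)}|\to+\infty$ a.e. on $\Omega\setminus E_i$, which is exactly the assertion that $(\varphi_k)$ converges locally modulo $2\pi$ to $\varphi_\infty$, with the same partition and the same integer sequences. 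Since $\varphi_\infty$ is a lifting of $u$, the proof is complete.
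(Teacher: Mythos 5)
You adopt the right global strategy --- reduce to Theorem~\ref{thm:compactness} by manufacturing, out of $(\varphi_k)$, a sequence of liftings of a single fixed map --- but you leave the pivotal step unresolved, and the fix you gesture at would not close it. The step in question is the uniform bound on $\Hausdorff(S^{\sigma'}_{\psi_k})$ (for some $\sigma'\in(0,\pi]$) required to invoke Theorem~\ref{thm:compactness} for $\psi_k=\varphi_k+\beta_k$. You call this ``the main obstacle'' and suggest coarea/level-set slicing together with ``lower semicontinuity of $|D^cu_k|$, $\Hausdorff\res S_{u_k}$ type quantities under weak-$*$ convergence'' --- but $\Hausdorff\res S_{u_k}$ is \emph{not} lower semicontinuous along a weakly-$*$ converging $BV$ sequence, and slicing gives no control on how jumps of $\beta_k$ interact with those of $\varphi_k$. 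So, as written, your proposal does not produce the hypothesis of Theorem~\ref{thm:compactness}, and this gap is genuine.

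The paper resolves it by a cleaner decomposition that you do not consider. It takes the Davila--Ignat lifting $\overline\varphi_k\in BV(\Om)\cap L^\infty(\Om)$ of $u_k$ (Theorem~\ref{thm:dav-ign}, with $|\overline\varphi_k|_{BV}\le 2|u_k|_{BV}\le C$ thanks to \eqref{eq:dododo}) and sets $v_k:=\varphi_k-\overline\varphi_k$, which is a $2\pi\mathbb Z$-valued lifting of the constant map $(1,0)$. Since every nonzero jump of $v_k$ has absolute value $\ge 2\pi$, the triangle inequality forces $S_{v_k}\subseteq S^\sigma_{\varphi_k}\cup S^\sigma_{\overline\varphi_k}$ for any $\sigma\le\pi$, and both sets have measure bounded uniformly in $k$; thus Theorem~\ref{thm:compactness} applies to $(v_k)$, and the conclusion transfers to $(\varphi_k)$ since $\overline\varphi_k\to\overline\varphi_\infty$ weakly-$*$ in $BV$ and a.e.\ up to a subsequence. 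Your additive correction $\beta_k$ can in principle also be made to work, but only by the same triangle-inequality trick applied to the integer part of $S_{\psi_k}$, namely $S^I_{\psi_k}\subseteq S^\sigma_{\varphi_k}\cup S^\pi_{\beta_k}$ (with $\beta_k$ the Davila--Ignat lifting of $w_k$, of bounded $BV$ norm) together with $S_{\psi_k}=S^I_{\psi_k}\cup S_u$ and $\Hausdorff(S_u)<\infty$; this is the observation missing from your sketch. A further small slip: ``$\beta_k\to 0$ a.e.'' is not justified with the $BV$-bounded choice of $\beta_k$; one only obtains $\beta_k\to\beta_\infty$ a.e.\ for some $2\pi\mathbb Z$-valued $\beta_\infty\in BV(\Om)$, and the limit lifting becomes $\varphi_\infty-\beta_\infty$ (still a $GBV$ lifting of $u$), which is harmless but must be stated. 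Choosing instead $\beta_k$ with range in $(-\pi,\pi]$ to force $\beta_k\to0$ would destroy the $BV$ bound you need for the jump estimate.
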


\section{Proofs of Theorems \ref{thm:compactness} and \ref{teo:compactness}} \label{sec:proofs-compactness}
We split the proof of Theorems \ref{thm:compactness} and \ref{teo:compactness} into a number of intermediate lemmas.
\begin{lemma}[\textbf{Localized compactness}]\label{lem:passo_base} 
Let $\Omega\subset\R^\dimension$ be a bounded open set with Lipschitz 
boundary, 
and $u\in BV(\Om;\mathbb S^1)$. 
 {Let $0\le\sigma\le\pi$ and} $(\varphi_k)_{k \geq 1}
\subset GBV(\Om)$ be 
a sequence of liftings of $u$ satisfying
\eqref{equi-bound-jump}.
Let
	$F\subset\Om$ be a nonempty finite perimeter set
in $\Om$. 
	Then, for  a not-relabelled subsequence, 
there exist a sequence 
$(d_k)_{k\ge1}\subset \mathbb{Z}$, a finite perimeter set 
$$
E \subseteq F
$$
in $\Om$, 
 and a function $\varphi_\infty\in GBV(\Om)$, 
such that:
\begin{equation}\label{eq:phi_infty_is_a_lifting}
 \varphi_\infty 
\text{ is a lifting of } u \text{ in } E\,, \quad
\varphi_\infty=0\quad\text{ in }
\Om\setminus E\,,
\end{equation}
and
	\begin{equation}\label{eq:lem5.9}
		\begin{split}
		&	\lim_{k\to+\infty}(\varphi_k(x)-2\pi d_k)= \varphi_\infty(x)\qquad \text{ for a.e. }x\in E\,,
\\
			&  \lim_{k\to+\infty}	|\varphi_k(x)-2\pi d_k|= +\infty\qquad \text{ for a.e. }x\in F\setminus E,
\\
		\end{split}
	\end{equation}
\begin{equation}\label{eq:boh}
\begin{aligned}
& |E|\geq \frac{n^n \omega_n|F|^\dimension}{{2^n}(2
\constcompu
+\Hausdorff(\partial^*F))^\dimension}>0, 
\\
\end{aligned}
\end{equation}
If furthermore
\begin{equation}\label{eq:if_furthermore}
|\varphi_k(x)-2\pi d_k|\rightarrow +\infty \qquad {\rm for~ a.e.~}
x\in \Om\setminus E,
\end{equation}
and {$\{x\in \Om:\varphi_k(x)-2\pi d_k\rightarrow \pm\infty\}$ 
are finite perimeter sets in $\Om$}, 
then 
\begin{align}\label{eq:lem5.9:bis}
 \liminf_{k\rightarrow +\infty}\Hausdorff\big(
S^\sigma_{\varphi_k}\cap A\big)\ge \Hausdorff\big(
A\cap \partial^*E\big)
\quad \text{ for any } A\in \mathcal A( \Om),
\end{align}
and 
\begin{equation}\label{eq:lem5.9:tris}
 \Hausdorff(F\cap\partial^*E)=\Hausdorff(F\cap\partial^* (F\setminus E))\le 
\constlemmabase.
\end{equation}
\end{lemma}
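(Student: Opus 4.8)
The plan is to reduce everything to a single use of the equivalent characterisation of $GBV$ via truncations together with a compactness argument for the truncated functions. First I would fix $M>0$ and set $\varphi_k^M:=(\varphi_k\wedge M)\vee(-M)\in BV_{\rm loc}(\Om)$. The slicing/coarea structure of $BV$ gives, for a.e.\ level $t\in(-M,M)$, that $\{\varphi_k>t\}$ has finite perimeter in $\Om$ and, integrating the bound \eqref{equi-bound-jump} against the coarea formula restricted to jumps of size $\ge\sigma$, one controls $\Hausdorff(\partial^*\{\varphi_k>t\})$ on a set of levels of positive measure (this is where the constant $2\constcompu+\Hausdorff(\partial^*F)$ and the isoperimetric-type lower bound in \eqref{eq:boh} will come from: a fixed fraction of the mass of $F$ must end up in a region where $\varphi_k-2\pi d_k$ stays bounded, because the total boundary produced is bounded). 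I would then choose the integer shifts $d_k$ so that the superlevel sets $\{\varphi_k-2\pi d_k> t_0\}$ for a well-chosen $t_0$ have volume bounded away from $0$ and from $|F|$; this is the standard "recentering" step.

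Next I would extract, by $BV$-compactness applied to the (equibounded, equi-$BV$) sequence of truncations $\chi_{\{\varphi_k-2\pi d_k>t\}}$ for countably many levels $t$ simultaneously via a diagonal argument, a limiting family of finite perimeter sets; monotonicity in $t$ lets me define $\varphi_\infty$ as the associated "generalised inverse" (essentially $\varphi_\infty(x)=\sup\{t:x\in\lim_k\{\varphi_k-2\pi d_k>t\}\}$), and the set $E$ as the set where this sup is finite, with $\varphi_\infty:=0$ on $\Om\setminus E$. The a.e.\ convergence statements \eqref{eq:lem5.9} then follow from the a.e.\ convergence of the truncations and the definition of $E$. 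That $\varphi_\infty$ is a lifting of $u$ on $E$ follows because $e^{i\varphi_k}=u$ a.e.\ and the shifts are multiples of $2\pi$, so passing to the limit on $E$ preserves $e^{i\varphi_\infty}=u$; the inclusion $E\subseteq F$ is built in by working inside $F$ from the start, and \eqref{eq:boh} is the isoperimetric estimate recorded above.

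For the "furthermore" part, assume \eqref{eq:if_furthermore} and that $\Om^\pm:=\{x:\varphi_k-2\pi d_k\to\pm\infty\}$ have finite perimeter. The point is that $\partial^*E$ is, up to $\Hausdorff$-null sets, contained in $\partial^*\Om^+\cup\partial^*\Om^-\cup\partial^*E$ seen as an interface across which $\varphi_k-2\pi d_k$ has an unbounded (hence, eventually, $\ge\sigma$ in modulus) jump for $k$ large. Concretely, for $\Hausdorff$-a.e.\ $x\in A\cap\partial^*E$ the one-sided approximate limits of $\varphi_k-2\pi d_k$ straddle a bounded value on the $E$-side and $\pm\infty$ on the other side, so by a lower-semicontinuity/Fatou argument on slices (1D sections, using that a BV function on a line with a large oscillation across a point has a jump there of comparable size) one gets that such $x$ lies in $S^\sigma_{\varphi_k}$ for $k$ large enough along a subsequence depending measurably on $x$; integrating gives \eqref{eq:lem5.9:bis}. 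Finally \eqref{eq:lem5.9:tris}: since $F\cap\partial^*E=F\cap\partial^*(F\setminus E)$ by complementation within $F$, applying \eqref{eq:lem5.9:bis} with $A=\Om$ (or an open neighbourhood of $F$) bounds $\Hausdorff(F\cap\partial^*E)$ by $\liminf_k\Hausdorff(S^\sigma_{\varphi_k})\le\constcompu=:\constlemmabase$.

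The main obstacle I expect is making the "unbounded jump forces a $\ge\sigma$ jump for large $k$" step rigorous and uniform enough to survive integration: the subsequence along which $\varphi_k-2\pi d_k$ blows up on one side may depend on $x$, so one must argue on fixed slices, use the BV slicing theory (jumps of $\varphi_k$ on a line are detected by the 1D jump sets), and combine with Fatou to pass the liminf outside the integral — this is the technical heart of \eqref{eq:lem5.9:bis} and hence of the whole lemma.
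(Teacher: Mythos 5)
Your plan is broadly aligned with the paper's proof: both rely on truncations to bring $GBV$ maps into $BV$, and both identify 1D slicing combined with Fatou's lemma as the key device for the lower perimeter bound \eqref{eq:lem5.9:bis} and the membership $\varphi_\infty\in GBV$. The paper in fact adapts the slicing-compactness machinery of Almi--Tasso and Chambolle--Crismale to the auxiliary functions $\widehat\varphi_k:=(\varphi_k-2\pi d_k)\chi_F+k\chi_{\Om\setminus F}$, which is essentially the technique you sketch. The derivation of \eqref{eq:lem5.9:tris} from \eqref{eq:lem5.9:bis} by taking $A$ large is also exactly what the paper does. For the first part (existence of $E$, the shifts $d_k$, $\varphi_\infty$, and the volume bound \eqref{eq:boh}), the paper does not reprove anything but cites its quadratic-case precursor \cite[Lemma~4.1]{BMS25}; your self-contained construction via superlevel sets, coarea, and recentering is a genuine alternative, and it is plausible, provided you track carefully that the coarea bound on $|D\varphi_k^M|(\Om)$ decomposes as (diffuse part controlled by $|Du|(\Om)$) $+$ (small-jump part controlled by $\int_{S_u}|\jump u|\,{\rm d}\Hausdorff$, using $\sigma\le\pi$ so the small jumps of a lifting are comparable to those of $u$) $+$ (large-jump part $\le 2M\Hausdorff(S^\sigma_{\varphi_k})\le 2M\constcompu$); you should not invoke $\Hausdorff(S_u)$ itself, which may be infinite for a general $u\in BV(\Om;\mathbb S^1)$.

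One imprecision worth flagging: the assertion that ``$\Hausdorff$-a.e.\ $x\in A\cap\partial^*E$ lies in $S^\sigma_{\varphi_k}$ for $k$ large'' is false as stated --- the large-jump sets move with $k$ and will in general avoid any fixed $x$. What the slicing argument actually yields is a counting inequality on a.e.\ one-dimensional section: along a.e.\ slice $(\xi,y)$, a not-relabelled subsequence has a \emph{fixed finite number} of large-jump points whose locations converge in $\overline{\Om_y^\xi}$, and the blow-up of $\widehat\varphi_k$ across $\partial^*E$ forces at least one of those limit locations onto each point of $\partial^*E_y^\xi$. One then gets $\liminf_k\mathcal H^0(A_y^\xi\cap S^\sigma_{(\widehat\varphi_k)_y^\xi})\ge\mathcal H^0(A_y^\xi\cap\partial^*E_y^\xi)$ and integrates via Fatou in $y$ and $\xi$. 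You do gesture at this in your last paragraph and correctly identify it as the technical heart, but the formulation in terms of ``$x\in S^\sigma_{\varphi_k}$ eventually'' would not survive a careful write-up; it should be replaced by the slice-wise jump-counting statement.
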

\begin{proof}{Following the lines of the proof of \cite[Lemma 4.1]{BMS25} and noticing that \cite[equation 4.8]{BMS25} still holds if we replace $S_{\varphi_k}$ and $S_{\varphi_1}$ with $S_{\varphi_k}^\sigma$ and $S_{\varphi_1}^\sigma$	we can prove that there exist a sequence $(d_k)_{k\ge1}\subset\mathbb Z$, a measurable set $E\subset F$, and a measurable  function $\varphi_\infty\colon\Omega\to\R$ such that \eqref{eq:phi_infty_is_a_lifting}--\eqref{eq:boh} hold true for a not-relabelled subsequence.}

{We next show that $E$ has finite perimeter and $ \varphi_\infty\in GBV(\Omega)$. For each $k\ge1$ we define the auxiliary function 
	\begin{equation*}
		\widehat\varphi_k:=\begin{cases}
\varphi_k-2\pi d_k&\text{ in }F\\[1em]
k&\text{ in } \Omega\setminus F.
		\end{cases}\,.
	\end{equation*}
	Since $F$ has finite perimeter we have $(	\widehat\varphi_k)\subset GBV(\Omega)$. Moreover 
	\begin{equation*}
\sup_{k\in\mathbb{N}}	\Hausdorff(S_{\widehat\varphi_k}^\sigma)\le 
\constcompu + \Hausdorff(\partial^*F)\,,
	\end{equation*}
	and 
		\begin{equation*}\
		\begin{split}
			&	\lim_{k\to+\infty}\widehat\varphi_k(x)= \varphi_\infty(x)\qquad \text{ for a.e. }x\in E\,,
			\\
			&  \lim_{k\to+\infty}	|\widehat\varphi_k(x)|= +\infty\qquad \text{ for a.e. }x\in \Omega\setminus E\,.
		\end{split}
	\end{equation*}
Now we proceed by adapting a
slicing argument in \cite{AT} (see also \cite{CC})}.
%
For $\xi\in\mathbb{S}^{n-1} \subset \Rn$ and $y\in \Pi^\xi:=\{y\in \R^n\colon y\cdot\xi=0\}$ we introduce the following notation:
\begin{equation*}
A_y^\xi:=\{t\in\R\colon y+t\xi\in A\}\,, \quad f_y^\xi(t):=f(y+t\xi)\,,
\end{equation*}
for any $A\subset \R^n$ and  $f\colon A\to\R^m$. In particular $(\widehat \varphi_k)_y^\xi$ is a map of one variable in $GBV(\Om_y^\xi)$ for a.e. $y\in \Pi^\xi$.

Recalling that 
$S^{\sigma}_{(\widehat\varphi_k)_y^\xi}:=\{t\in S_{(\widehat\varphi_k)_y^\xi}\colon |\jump{(\widehat\varphi_k)_y^\xi(t)}|\ge\sigma\}$ and $S^{\sigma}_{\widehat\varphi_k}:=\{x\in S_{\widehat\varphi_k}\colon |\jump{\widehat\varphi_k(x)}|\ge \sigma\}$, we have 
\begin{equation*}
	S^\sigma_{(\widehat\varphi_k)_y^\xi}\subset (S^\sigma_{\widehat\varphi_k})_y^\xi\,.
\end{equation*}
This together with {Fatou's lemma imply
\begin{equation*}
	\begin{split}
		&\int_{\Pi_\xi}\liminf_{k\to+\infty}\left(
		|D(\widehat\varphi_k)_y^\xi|(\Omega_y^\xi\setminus S^{\sigma}_{(\widehat\varphi_k)_y^\xi} )+\mathcal H^0(\Omega_y^\xi\cap S^{\sigma}_{(\widehat\varphi_k)_y^\xi} )\right)
		{\rm d}\Hausdorff(y)\\
		&
\le 
		\int_{\Pi_\xi}\liminf_{k\to+\infty}\left(|D(\widehat\varphi_k)_y^\xi|(\Omega_y^\xi\setminus S^{\sigma}_{(\widehat\varphi_k)_y^\xi} )+\mathcal H^0\left(\Omega_y^\xi\cap (S^{\sigma}_{\widehat\varphi_k} )_y^\xi\right)\right)
		{\rm d}\Hausdorff(y)\\
		&\le \liminf_{k \to +\infty}
		\int_{\Pi_\xi}|D(\widehat\varphi_k)_y^\xi|(\Omega_y^\xi\setminus S^{\sigma}_{(\widehat\varphi_k)_y^\xi} )+\mathcal H^0\left(\Omega_y^\xi\cap (S^{\sigma}_{\widehat\varphi_k})_y^\xi\right)
		{\rm d}\Hausdorff(y)\\
		&\le \liminf_{k \to +\infty}\left( |D\widehat\varphi_k|(\Omega\setminus S^{\sigma}_{\widehat{\varphi}_k}) + \int_{\Omega\cap S^{\sigma}_{\widehat\varphi_k}} |\nu_{\widehat\varphi_k}\cdot\xi|{\rm d}\Hausdorff\right)\\
		& \le c|Du|(\Omega)+ \sigma\Hausdorff(\partial^*F)
		+c\liminf_{k\rightarrow +\infty}\Hausdorff( S^{\sigma}_{\widehat\varphi_k})<+\infty\,,
	\end{split}
\end{equation*}
where the last inequality follows from the fact that $ (S_{\widehat\varphi_k}\setminus S^{\sigma}_{\widehat\varphi_k})\subset S_u\cup \partial^*F$, 
and since $\sigma\leq \pi$, one has   $|\jump{\widehat \varphi(x)}|\leq c|\jump{u(x)}|$ for a.e. $x\in S_{\widehat\varphi_k}\setminus S^{\sigma}_{\widehat\varphi_k}$ and some $c>0$.
}
Thus for
$\Hausdorff$-a.e. $\xi \in \mathbb S^{n-1}$ and 
 $\Hausdorff$-a.e. $y \in \Pi^\xi$ 
we can consider a 
not relabelled subsequence (depending on $\xi$  and $y$) such that
\begin{equation*}
\sup_k	\left[|D(\widehat\varphi_k)_y^\xi|(\Omega_y^\xi\setminus S^{\sigma}_{(\widehat\varphi_k)_y^\xi} )+\mathcal H^0(\Omega_y^\xi\cap S^{\sigma}_{(\widehat\varphi_k)_y^\xi} )
\right]<+\infty\,.
\end{equation*}
Hence
for not relabelled subsequence (depending on $\xi$ and $y$) 
there exists $n \in \mathbb N$ depending on $\xi$ and $y$ 
such that $\mathcal H^0\left(\Omega_y^\xi\cap S^{\sigma}_{(\widehat\varphi_k)_y^\xi} \right)=n$
for all $k\geq1$, and the jump points converge 
as well in $\overline{\Omega_y^\xi}$ as $k \to +\infty$ for $\Hausdorff$-a.e. 
$\xi \in \mathbb S^{n-1}$ and 
$\Hausdorff$-a.e. $y \in \Pi^\xi$ .
 Hence, using the fact that $\widehat \varphi_k\rightarrow +\infty$ in $\Om\setminus E$, and therefore on $\Om_y^\xi\setminus E_y^\xi$,  it is possible to show that
\begin{equation*}
	\liminf_{k\to+\infty}\mathcal H^0\left(
A_y^\xi\cap S^{\sigma}_{(\widehat\varphi_k)_y^\xi}\right)\ge \mathcal H^0\left(A_y^\xi\cap\partial^*
E_y^\xi
	\right)
\end{equation*}
(see \cite{AT} for details).
By integrating over $\Pi^\xi$ and using once more the 
Fatou's lemma we conclude \begin{equation*}
	\begin{split}
	\liminf_{k \to +\infty}\int_{A\cap S^{\sigma}_{\widehat\varphi_k}} |\nu_{\widehat\varphi_k}\cdot\xi|{\rm d}\Hausdorff
&\ge
\int_{\Pi^\xi}
\liminf_{k\to+\infty}\mathcal H^0\left(A_y^\xi\cap S^{\sigma}_{(\widehat\varphi_k)_y^\xi}\right){\rm d}\Hausdorff(y)
\\
&\ge 
\int_{\Pi^\xi}\mathcal H^0\left(\partial^*
E_y^\xi	\right){\rm d}\Hausdorff(y) \,,
	\end{split}
\end{equation*}
for $\Hausdorff$-a.e. $\xi\in\mathbb{S}^{n-1}$ and every open 
set $A\subseteq\Omega$. As a consequence we deduce that  $E$ has finite perimeter in $\Om$. Moreover by integrating over $\mathbb{S}^{n-1}$ we get 
\begin{equation*}
\alpha_n\liminf_{k \to +\infty}\Hausdorff(A\cap S^{\sigma}_{\widehat\varphi_k})\ge \alpha_n\Hausdorff(A\cap\partial^*E)\,,
\end{equation*}
where $\alpha_n:=\int_{\mathbb S^{n-1}}|\nu\cdot\xi|{\rm d}\Hausdorff(\nu)$, which in turn implies 
\begin{equation}\label{liminf}
	\liminf_{k \to +\infty}\Hausdorff(A\cap S_{\widehat\varphi_k}^\sigma)\ge \Hausdorff(A\cap\partial^*E)\,.
\end{equation}
To prove that $\varphi_\infty\in GBV(\Omega)$ 
we argue as follows. For $N\in\mathbb N$ we set $\widehat{\varphi}_k^N:=(\widehat{\varphi}_k\wedge N)\vee(-N)$, so that 
\begin{equation*}
|	D\widehat{\varphi}_k^N|(\Omega)\le |Du|(\Omega)+ 2N\Hausdorff(S_{\widehat\varphi_k^N})\le c\,.
\end{equation*}
Hence $(\widehat{\varphi}_k^N)$ converges weakly* in $BV(\Omega)$ to $\widehat\varphi_\infty^N$ where 
\begin{equation*}
\widehat\varphi_\infty^N
:=\begin{cases}
(\varphi_\infty\wedge N)\vee (-N)&\text{ in }E,
\\
\pm N& \text{ in }\Omega\setminus E,
\end{cases} \quad {\rm so ~ that} \quad 
\widehat\varphi_\infty^N
\in BV(\Omega)\,.
\end{equation*}
Hence observing that in the whole of
$\Omega$ we have $(\varphi_\infty\wedge N)\vee (-N)= \widehat\varphi_\infty^N\chi_E\in BV(\Omega)$ we conclude $\varphi_\infty\in GBV(\Omega)$. Eventually if in addition \eqref{eq:if_furthermore} holds then \eqref{eq:lem5.9:bis}  can be deduced in the 
same way as \eqref{liminf} with $\varphi_k-2\pi d_k$ in place of $\widehat\varphi_k$.
This additionally implies 
$$
P(E,F)\leq \constcompu,
$$
which shows  \eqref{eq:lem5.9:tris}.
\end{proof}

		\begin{lemma}\label{lem:ricoprimenti}
	{Let $0\le\sigma\le\pi$.}		Let  $(\varphi_k)_{k\ge1}\subset 
GBV(\Om)$  be a sequence of functions satisfying 
\begin{equation}\label{eq:uf}
C:= \sup_{k \in \mathbb N} 
\Hausdorff(S^\sigma_{\varphi_k}) < +\infty.
\end{equation}
 Let $N\ge1$ be an integer, $E_1,\dots,E_N\subset \Om$ be 
pairwise disjoint nonempty finite perimeter sets with 
the following property: for any $i=1,\dots,N$, 
			\begin{equation*}\label{eq:52}
				\begin{split}
{\liminf_{k\rightarrow +\infty}
\Hausdorff(S^\sigma_{\varphi_k}\cap A)
\ge \Hausdorff(
A \cap 
\partial^*E_i)
  \text{ for any  } A\in\mathcal A(\Omega)}\,.
				\end{split}
			\end{equation*}
			Then 
\begin{equation*}\label{70thesis}
\constcompu\ge \liminf_{k\rightarrow +\infty}\Hausdorff(S^\sigma_{\varphi_k})\ge  
\Hausdorff\big(\Omega\cap (\cup_{i=1}^N\partial^*E_i )\big)\,.
\end{equation*}

		\end{lemma}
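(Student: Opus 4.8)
The statement to prove is Lemma~\ref{lem:ricoprimenti}: given a uniform bound $C$ on $\Hausdorff(S^\sigma_{\varphi_k})$ and finitely many pairwise disjoint finite-perimeter sets $E_1,\dots,E_N$ each satisfying the lower-bound-on-open-sets property $\liminf_k \Hausdorff(S^\sigma_{\varphi_k}\cap A)\ge \Hausdorff(A\cap\partial^*E_i)$, one concludes $C\ge\liminf_k\Hausdorff(S^\sigma_{\varphi_k})\ge\Hausdorff\big(\Omega\cap(\cup_{i=1}^N\partial^*E_i)\big)$. The first inequality is immediate from \eqref{eq:uf}, so the content is the second. The plan is to apply the localisation Lemma~\ref{lem:localisation-lemma} to the superadditive set function $\mu(A):=\liminf_{k\to+\infty}\Hausdorff(S^\sigma_{\varphi_k}\cap A)$ together with the measure $\lambda:=\Hausdorff\res\big(\Omega\cap\bigcup_{i=1}^N\partial^*E_i\big)$ and a finite family of Borel functions $h_i$, $i=1,\dots,N$, designed so that $\sup_i h_i\ge 1$ $\lambda$-a.e.

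First I would check that $\mu$ is superadditive on disjoint open sets: if $A_1,A_2\in\mathcal A(\Omega)$ are disjoint then $\Hausdorff(S^\sigma_{\varphi_k}\cap(A_1\cup A_2))=\Hausdorff(S^\sigma_{\varphi_k}\cap A_1)+\Hausdorff(S^\sigma_{\varphi_k}\cap A_2)$, and $\liminf$ of a sum dominates the sum of $\liminf$'s, giving $\mu(A_1\cup A_2)\ge\mu(A_1)+\mu(A_2)$; finiteness $\mu(A)\le C<+\infty$ follows from \eqref{eq:uf}. Next, for each $i$ set $h_i:=\chi_{\partial^*E_i}$ (restricted to $\Omega$). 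The hypothesis says precisely that $\mu(A)\ge\Hausdorff(A\cap\partial^*E_i)=\int_A h_i\,{\rm d}\lambda$ for every $A\in\mathcal A(\Omega)$, since $\partial^*E_i\subseteq\Omega\cap\bigcup_j\partial^*E_j$ and hence $\Hausdorff\res\partial^*E_i = h_i\lambda$. Because the $E_i$ are pairwise disjoint finite-perimeter sets, the reduced boundaries $\partial^*E_i$ need not be disjoint, but their union is exactly the support of $\lambda$, so $\sup_{i=1,\dots,N}h_i\equiv 1$ $\lambda$-a.e.\ on $\Omega\cap\bigcup_j\partial^*E_j$ — indeed everywhere on that set at least one $h_i$ equals $1$. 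Applying Lemma~\ref{lem:localisation-lemma} then yields $\mu(A)\ge\int_A\sup_i h_i\,{\rm d}\lambda=\lambda(A)$ for every $A\in\mathcal A(\Omega)$; taking $A=\Omega$ gives $\liminf_k\Hausdorff(S^\sigma_{\varphi_k})=\mu(\Omega)\ge\lambda(\Omega)=\Hausdorff\big(\Omega\cap\bigcup_{i=1}^N\partial^*E_i\big)$, and combined with $\mu(\Omega)\le C$ this is the claim.

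The only subtle point — and the step I expect to require the most care — is the measurability and Borel-regularity bookkeeping needed to legitimately invoke Lemma~\ref{lem:localisation-lemma}: one must know that $\Hausdorff\res\big(\Omega\cap\bigcup_{i=1}^N\partial^*E_i\big)$ is a genuine (finite, positive, Borel) measure on $\Omega$, which holds since each $\partial^*E_i$ is $\Hausdorff$-rectifiable with $\Hausdorff(\partial^*E_i)=P(E_i,\Omega)<+\infty$ and $N$ is finite, and that each $h_i$ is a Borel function, which is clear. One should also note the identity $\int_A h_i\,{\rm d}\lambda=\Hausdorff(A\cap\partial^*E_i)$ used above: since $\lambda=\Hausdorff\res(\Omega\cap\bigcup_j\partial^*E_j)$ and $h_i=\chi_{\partial^*E_i}$, we get $\int_A h_i\,{\rm d}\lambda=\Hausdorff\big(A\cap\partial^*E_i\cap\bigcup_j\partial^*E_j\big)=\Hausdorff(A\cap\partial^*E_i)$ because $\partial^*E_i\subseteq\bigcup_j\partial^*E_j$. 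With these remarks in place the proof is a direct application of the localisation lemma, exactly as in the proof of Corollary~\ref{cor:existence-minimizer}, but with the simpler choice $h_i=\chi_{\partial^*E_i}$ in place of the three-family construction there.
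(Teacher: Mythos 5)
Your proof is correct and follows essentially the same approach as the paper: both define $\mu(A):=\liminf_k\Hausdorff(S^\sigma_{\varphi_k}\cap A)$, $\lambda:=\Hausdorff\res\big(\Omega\cap\bigcup_{i=1}^N\partial^*E_i\big)$, and $h_i:=\chi_{\partial^*E_i}$, then invoke Lemma~\ref{lem:localisation-lemma}. The only difference is that you spell out the superadditivity check and the measurability bookkeeping, which the paper takes for granted.
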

		
	\begin{proof} 
	We define  the superadditive set 
function $\mu\colon\mathcal A(\Omega)\to [0,+\infty)$ as
	\begin{equation*}
		\mu(A):= \liminf_{k\to+\infty}\Hausdorff(A\cap S^\sigma_{\varphi_k})\,,
	\end{equation*}
	and	the positive measure
		\begin{equation*}
		\lambda(\Borelset):=\Hausdorff\big(\Borelset\cap 
(\cup_{i=1}^N\partial^*E_i )\big)\quad \forall  \text{ Borel set }\Borelset
\subseteq\Omega\,.
		\end{equation*}
		Moreover for $i=1,\dots,N$ we set
		\begin{equation*}
		h_i:=\begin{cases}
			1& \text{ in }\partial^*E_i\\[1em]
			0&\text{ otherwise in }\Omega
		\end{cases}\,.
		\end{equation*}
		Clearly we have 
		\begin{equation*}
\mu(A)\ge \int_A h_i\, {\rm d}\lambda\quad \forall i=1,\dots,N\,.
		\end{equation*}
	Therefore, since 
	\begin{equation*}
	h:=\sup_{i=1,\dots, N}h_i=\begin{cases}
		1&\text{ in }\cup_{i= 1}^N \partial^*E_i\\[1em]
0&\text{ otherwise in }\Omega
	\end{cases}\,,
	\end{equation*}
	by Lemma \ref{lem:localisation-lemma} it follows
	\begin{equation*}
\liminf_{k\to+\infty}\Hausdorff(S^\sigma_{\varphi_k}\cap A)=	\mu(A)\ge  \int_Ah\,{\rm d}\lambda=\Hausdorff(A\cap (\cup_{i= 1}^N\partial^*E_i))\,,
	\end{equation*}
which implies the thesis by taking $A=\Omega$.

\end{proof}

\subsection{Proof of Theorem \ref{thm:compactness}}
		
We adapt the arguments of
\cite[Theorem 3.1]{BMS25}.
\\

\noindent \textbf{Base step $N=1$.}
We set 
\begin{equation*}\label{eq:F_1}
F_1:=\Omega.
\end{equation*}
 By Lemma \ref{lem:passo_base} we 
find, after extracting  a not-relabelled subsequence, 
a finite perimeter set $E_1\subseteq F_1$, 
a sequence $(d^{(1)}_k)_{k\ge1}\subset\mathbb Z$  and a function 
$\varphi^{(1)}_\infty\in GBV(\Om)$,
 such that 
\begin{equation*}
\varphi^{(1)}_\infty \text{ 
is a lifting of } u \text{ in } E_1,
\qquad	\varphi^{(1)}_\infty=0\quad\text{ in }F_1\setminus E_1\,,
\end{equation*}
and
\begin{equation*}
		|E_1|
\geq 
\frac{\dimension^\dimension \omega_\dimension
|F_1|^\dimension}{2^n(2
\constcompu
+\Hausdorff(\partial\Om))^\dimension}\,.
\end{equation*} 
Moreover
\begin{equation*}
\begin{aligned}
& (\varphi_k(x)-2\pi d^{(1)}_k)\rightarrow \varphi^{(1)}_\infty(x)
\qquad \text{ for a.e. }x\in E_1\,,\\[1em]
&  	|\varphi_k(x)-2\pi d^{(1)}_k|\rightarrow +\infty\qquad 
\text{ for a.e. }x\in F_1\setminus E_1\,.
\end{aligned}
\end{equation*}
Since $F_1 = \Om$, estimate \eqref{eq:lem5.9:bis} yields
\begin{equation*}\label{eq:final_part}
\liminf_{k\rightarrow +\infty}\Hausdorff
(S^\sigma_{\varphi_k}\cap A)\geq \Hausdorff(
A \cap \partial^*E_1)
\text{ for any open set } A\subseteq F_1\,.
		\end{equation*}
\smallskip
		
\noindent \textbf{Inductive step $N\leadsto N+1$.}  
		Let $N\ge2$. Suppose we have pairwise
		disjoint nonempty finite perimeter sets,
$E_1,\dots,E_{N}\subset\Omega$, 
and define inductively 
$$F_1:=\Omega\,,\quad
F_i:=\Omega\setminus \bigcup_{j=1}^{i-1}E_j \ \ \text{ for } i=2,\dots,N,
$$
so that $E_i\subset F_i$ for all $i=1,\dots,N$.
Assume that, still for all $i=1,\dots,N$, the following holds:
\begin{itemize}
\item[(i)] 
There exists a function $\varphi_\infty^{{(i)}}\in GBV(\Omega)$ 
which is 
a lifting of $u$ in  $E_i$, and 
$\varphi_\infty^{{(i)}}=0$  in $\Om\setminus E_i$;
\item[(ii)] The set $E_i$ satisfies
\begin{equation*}
|E_i|\geq \frac{
\dimension^\dimension \omega_\dimension
|F_i|^\dimension}{2^n(2\constcompu
+\Hausdorff(\partial^*F_i))^\dimension}\geq\frac{
\dimension^\dimension \omega_\dimension
|F_i|^\dimension}{2^n(3\constcompu
+\Hausdorff(\partial\Om))^\dimension};
\end{equation*} 
\item[(iii)]  
There exists a sequence 
$(d_k^{(i)})_{k}\subset \mathbb{Z}$, 
such that 
\begin{equation}
\begin{split}\label{eq:conv-m-insiemi}
&(\varphi_k(x)-2\pi \dki
)\rightarrow \varphi^{(i)}_\infty(x)\qquad \text{ for a.e. }x\in E_i\,,\\[1em]
					&|\varphi_k(x)-2\pi 
\dki
|\rightarrow +\infty\qquad \text{ for a.e. }x\in \Om\setminus E_i\,,\\[1em]
					& \liminf_{k\rightarrow +\infty}
\Hausdorff(S^\sigma_{\varphi_k}\cap A)
\geq 
\Hausdorff(A \cap \partial^*E_i)
\quad\text{ for any } A\in\mathcal A(\Omega)\,.
				\end{split}
\end{equation}
\end{itemize}
		
%
Set, for $N\ge1$,
$$
F_{N+1}:=\Om\setminus
\Big(\bigcup_{i=1}^{N}E_i\Big), 
$$
If $F_{N+1}=\emptyset$  nothing remains to prove. 
Therefore, we may suppose $F_{N+1}\not=\emptyset$.
 From Lemma \ref{lem:ricoprimenti} 
			\begin{equation}\label{eq:liminf-n-insiemi}
\liminf_{k\rightarrow +\infty}\Hausdorff
(S^\sigma_{\varphi_k})\geq 
\Hausdorff ( \Omega\cap(\cup_{i=1}^N\partial^*E_i))\,,
			\end{equation}
			and thus, since $\partial^*(\cup_{i=1}^NE_i)\subseteq \cup_{i=1}^N\partial^*E_i$,
			\begin{equation}\label{eq:controllo-perimetro-Fm+1}
\Hausdorff\Big(\Omega\cap\partial^*(\cup_{i=1}^NE_i)\Big)= 
\Hausdorff(\Omega \cap\partial^* F_{N+1})\le \constcompu\,.
			\end{equation}
		
Applying
 {Lemma \ref{lem:passo_base}} to the set $F=F_{N+1}$, we obtain 
 a 
finite perimeter set $$E_{N+1}\subseteq F_{N+1}\,,$$ a function 
$\varphi_\infty^{\pNpop}\in GBV(\Omega)$  and a sequence $(d_k^{\pNpop})_{k\ge1} \subset \mathbb Z$, such that  \begin{equation*}
			\varphi^{\pNpop}_\infty=0\quad\text{ in }\Omega\setminus E_{N+1}\,,\quad \varphi_\infty^{\pNpop}\text{ is a lifting of } u \text{ in } E_{N+1}\,,
		\end{equation*}
	and
		\begin{equation}\label{eq:controllo-area-Fm+1}
			|E_{N+1}|\geq \frac{\dimension^\dimension
\omega_\dimension|F_{N+1}|^n}{2^n(2\constcompu
+\Hausdorff(\partial^*F_{N+1}))^n}\,,
		\end{equation}
together with
		\begin{equation*}\label{eq:lem5.9BIS}
			\begin{split}
				&(\varphi_k(x)-2\pi d_k^{\pNpop})\rightarrow \varphi^{\pNpop}_\infty(x)\qquad \text{ for a.e. }x\in E_{N+1}\,,\\[1em]
				&  	|\varphi_k(x)-2\pi d^{\pNpop}_k|\rightarrow +\infty\qquad \text{ for a.e. }x\in F_{N+1}\setminus E_{N+1}\,,\\[1em]
				&\liminf_{k\rightarrow +\infty}
\mathcal H^{n-1}(S^\sigma_{\varphi_k}\cap B)
\geq \mathcal H^{n-1}(
B \cap \partial^*E_{N+1})
\text{ for any open set } B
\subseteq F_{N+1}\,.
			\end{split}
		\end{equation*}
{Gathering \eqref{eq:controllo-perimetro-Fm+1} and \eqref{eq:controllo-area-Fm+1} we have
			\begin{equation*}
				|E_{N+1}|\geq \frac{\dimension^\dimension
\omega_\dimension|F_{N+1}|^n}{2^n(2\constcompu
+\Hausdorff(\partial^*F_{N+1}))^n}\geq\frac{\dimension^\dimension \omega_\dimension|F_{N+1}|^n}{2^n(3\constcompu
+\Hausdorff(\partial\Om))^n} >0\,.
			\end{equation*}
		}
		Moreover by \eqref{eq:conv-m-insiemi}, also
		\begin{align*}
			|\varphi_k(x)-2\pi d^{\pNpop}_k|\rightarrow +\infty\qquad \text{ for a.e. }x\in \Om\setminus E_{N+1}\,.
		\end{align*}
		Thus 
properties (i)-(iii) are preserved at level $N+1$.
\\
		
		\noindent \textbf{Conclusion.} 
	Iterating the above 
construction and extracting a diagonal subsequence, we obtain 
 a sequence $(E_i)_{i\ge1}$ of mutually disjoint finite perimeter sets in $\Omega$ such that for every $N\ge1$, $E_1,\dots,E_N$ satisfy 
properties (i)-(iii). 
From \eqref{eq:liminf-n-insiemi},
\begin{equation}\label{liminf-passo-n}
				\liminf_{k\rightarrow +\infty}
\Hausdorff(S^\sigma_{\varphi_k})\geq \Hausdorff( 
\Omega\cap(\cup_{i=1}^
\indiceteo\partial^* E_i)) \qquad\forall \indiceteo
\ge1\,.
			\end{equation}
		To conclude, we  show that $$|\Om\setminus (\cup_{i=1}^{+\infty} E_i)|=0.$$ Since $\sum_{\indiceteo=1}^{+\infty}
|E_\indiceteo|<+\infty$, the sequence 
$(|E_\indiceteo|)$ tends to zero as $\indiceteo
\rightarrow +\infty$,
and using
$$|E_\indiceteo|\geq \frac{\dimension^\dimension \omega_\dimension
|F_\indiceteo|^2}{2^\dimension(3\constcompu
+\Hausdorff(\partial \Om))^2}$$  we deduce that $|F_\indiceteo|\rightarrow 0$. 
In particular, $$|\Om\setminus (\cup_{i=1}^{+\infty} E_i)|=\lim_{\indiceteo
\rightarrow+\infty}|\Om\setminus (\cup_{i=1}^{\indiceteo
-1} E_i)|=\lim_{\indiceteo
\rightarrow +\infty}|F_\indiceteo|=0\,.$$ Define
		$$\varphi_\infty(x):=\varphi_\infty^{\pip}(x)\qquad \text{ 
if }x\in E_i {\rm ~for ~some~} i \in \NN.$$
Then $\varphi_\infty$ is a lifting of $u$ in $\Omega$. 
We now show that 
\begin{equation}\label{eq:phiGBV}
\varphi_\infty\in GBV(\Om).
\end{equation}
First observe that by letting $m\to\infty$ in \eqref{liminf-passo-n} 
we deduce that
		\begin{equation}\label{liminf-perimeter}
M\ge	\liminf_{k\rightarrow +\infty}
\Hausdorff(S^\sigma_{\varphi_k})\geq \Hausdorff( 
\Omega\cap(\cup_{i=1}^
\infty\partial^* E_i))\,.
		\end{equation}
		Now, for each $N\in \mathbb N$ we define the auxiliary sequence 
		\begin{equation*}
\Phi_k^N:=
((\varphi_k-2\pi d_k^{(i)})\wedge N)\vee(-N) \quad\text{ in}\quad E_i \quad\text{for all }i\ge1\,.
		\end{equation*}
		Thus, noticing that $S_{\Phi_k^N}\subset S_u\cup S^\sigma_{\varphi_k}\cup (\cup_{i=1}^\infty\partial^*E_i)$
		 we have from \eqref{liminf-perimeter}
		\begin{equation*}
|D\Phi_k^N|(\Omega)\le \int_\Omega|\nabla u|\dx+ |D^cu|(\Omega)+\int_{S_u}|u^+-u^-|{\rm d}\Hausdorff+ 2N\Hausdorff(S^\sigma_{\varphi_k})
\le C\,,
		\end{equation*}
with $C>0$ depending on $N$.
Therefore, up to a subsequence, $(\Phi_k^N)$ converges  to
$(\varphi_\infty\wedge N)\vee (-N)$ weakly* in $BV(\Omega)$. By the arbitrariness of $N$ we deduce \eqref{eq:phiGBV}.

\qed

\subsection{Proof of Theorem \ref{teo:compactness}}
\label{sec:compactness_of_liftings_of_a_converging_sequence}
	Let $\overline \varphi,\overline \varphi_k\in BV(\Om)\cap L^\infty(\Om)$ be liftings of $u$ and of $u_k$ in $\Om$, respectively with 
\begin{equation*}\label{eq:varphi_BV}
|\overline \varphi|_{BV}\leq 2|u|_{BV}\leq C,\qquad\qquad |\overline \varphi_k|_{BV}\leq 2|u_k|_{BV}\leq C,
\end{equation*}
(see Theorem \ref{thm:dav-ign}). The constant $C>0$ can be found 
thanks to the hypothesis 
\eqref{eq:dododo}
and so $u_k$ are uniformly bounded in $BV(\Omega)$.  
In particular, for  $\sigma\in[0,\pi]$ as in 
the statement, there is a positive  constant $\newconst$ such that 
\begin{align*}
	\sup_{k\geq1} \{\mathcal H^{n-1}(S^\sigma_{\overline \varphi_k})
+\mathcal H^{n-1}(S^\sigma_{\overline \varphi})\}\leq \newconst.
\end{align*} 
Furthermore, $\overline \varphi_k$ have equibounded $BV$-norm, and so there is a (not relabelled) subsequence such that 
\begin{align}\label{convoverline}
\overline \varphi_k\rightarrow \overline \varphi_\infty\text{ weakly$^*$ in }BV(\Om); 
\end{align}
since up to a subsequence the same convergence holds pointwise a.e.,
and also $u_k\rightarrow u$ a.e. on $\Om$, we deduce that $\overline\varphi_\infty$ must be a lifting of $u$.

%
%
%
%
%
Thanks to the fact that $\sigma\leq\pi$,  the functions $$v_k:=\varphi_k-\overline \varphi_k,\qquad k\geq1$$ take values in $2\pi\mathbb Z$, and belong to $GSBV(\Om;2\pi\mathbb Z)$, since the jump sets satisfy
$S_{v_k}\subseteq S^\sigma_{\overline \varphi_k}\cup S^\sigma_{\varphi_k},$
and so 
$$\mathcal H^{n-1}(S_{v_k})\leq \mathcal H^{n-1}(S^\sigma_{\overline \varphi_k})+\mathcal H^{n-1}( S^\sigma_{\varphi_k})<
\newconst <+\infty,$$ for all $k\geq1$.
Trivially $v_k$ are liftings of the same 
function $f\equiv(1,0)\in \mathbb S^1$, and so we can apply 
Theorem \ref{thm:compactness} to obtain that, for a non-relabelled subsequence, there exist a Caccioppoli 
partition $\{E_i\}_{i\in \mathbb N}$ of $\Om$, 
sequences of integers $(d^{(i)}_k)_{k\geq1}$, and 
a function $v_\infty\in GSBV(\Om;2\pi\mathbb Z)$ such that, for all $i \in \mathbb N$,
\begin{align*}
&(\varphi_k-\overline \varphi_k-2\pi d_k^{(i)})\rightarrow v_\infty \text{ pointwise a.e. on }E_i, \\
&|\varphi_k-\overline \varphi_k-2\pi d_k^{(i)}|\rightarrow +\infty \text{ pointwise a.e. on }\Om\setminus E_i.
\end{align*}
From this and \eqref{convoverline} we conclude
\begin{align*}
	&(\varphi_k-2\pi d_k^{(i)})\rightarrow v_\infty+\overline \varphi_\infty \text{ pointwise a.e. on }E_i, \\
	&|\varphi_k-2\pi d_k^{(i)}|\rightarrow +\infty \text{ pointwise a.e. on }\Om\setminus E_i.
\end{align*}
This  is the thesis, just by setting $\varphi_\infty:=\overline \varphi_\infty+v_\infty$, which is a lifting of $u$ 
since $v_\infty$ takes values in $2\pi\mathbb Z$.
\qed

\section{$\Gamma$-convergence of functionals 
on $\mathbb{S}^1$- valued maps}
\label{sec:Gamma_convergence}
The proof of Theorem \ref{thm1} can be achieved by suitably adapting the proof of \cite[Theorem 1.1]{BMS25} to the case with linear growth. For this reason we omit here the details. 
The proof of Theorem \ref{thm2} is more delicate. In particular
 the lower bound inequality requires a local argument which relies
 on the compactness result 
for liftings (Theorem \ref{teo:compactness}).
For convenience we introduce the localised Modica-Mortola-type (or Allen-Cahn type) functionals
\begin{equation*}
\ModicaMortolaeps
(v, \sottoaperto)
:=\int_{\sottoaperto}\left(\var |\nabla v|^2+\frac{W(v)}{\var}
\right)\dx \qquad \forall v \in W^{1,2}(\Om),
\end{equation*}
for every open set $\sottoaperto\subseteq\Omega$, where $W$ is defined as in \ref{hyp:W} of
Section \ref{sec:Gamma_approximation_and_compactness_for_the_MS_functional}.

\subsection{Proof of Theorem \ref{thm2}}

\noindent 
\textit{Step 1: Lower bound}. 
Let $\var_k\searrow0$ as $k\to+\infty$.
We have to show that, for every sequence $((u_k,v_k))_{k\ge1}\subset L^1(\Omega;\mathbb{S}^1)\times L^1(\Omega)$ 
converging to $(u,v)$ in $L^1(\Omega;\mathbb{S}^1)\times L^1(\Omega)$,
\begin{equation}\label{lw_bound2}
	\liminfk
\ATtwoepsk(u_k,v_k)\ge 
\MSlift(u,v)\,.
\end{equation}
We may assume 
\begin{equation*}\label{eq:assume_C}
\sup_{k \in \NN} 
\ATtwoepsk(u_k,v_k)\leq C <+\infty
\end{equation*}
 so that $(u_k,v_k)\in\domATpiccolo$, $v=1$ a.e. in $\Omega$
and, 
up to a not relabelled subsequence, 
\begin{equation*}
	\liminfk
\ATtwoepsk
(u_k,v_k)=	\limk
\ATtwoepsk
(u_k,v_k),
\end{equation*}
From Theorem~\ref{thm:Linear-Ambrosio-Tortorelli} 
it follows that 
\begin{equation*}
	\begin{split}
	\liminf_{k\to+\infty}
\ATtwoepsk
(u_k,v_k)\ge \int_\Omega f( |\nabla u| )\dx+|D^cu|(\Omega)+\int_{S_u}g(|u^+-u^-|){\rm d}\Hausdorff\,.
	\end{split}
\end{equation*}
In particular, from Remark \ref{rem:decoupling}-(iii)-$5$
 and the fact that $|u^+-u^-|\le 2$ a.e., since $u$ is $\mathbb{S}^1$-valued we deduce $u\in BV(\Omega;\mathbb{S}^1)$.
For every $k\ge1$ we choose a lifting
$\varphi_k\in W^{1,2}(\Omega)$ of $u_k$ in $\Om$.
Using  that $|\nabla u_k|=|\nabla \varphi_k|$ we have
\begin{equation*}
	+\infty>C\ge \ATtwoepsk(u_k,v_k)=\int_\Omega\left(
\psi(	v_k) f(|\nabla \varphi_k|)+ \var_k|\nabla v_k|^2+\frac{W(v_k)}{\var_k}\right)\dx\,.
\end{equation*}
Thus, from the coarea formula,
\begin{equation}\label{eq:etaprimo}
C\geq \MMepsk(v_k, \Om)
\ge \int_\Omega \sqrt{W(v_k)}|\nabla v_k|\dx \\
= \int_0^1\sqrt{W(t)}\Hausdorff(\partial^* F_k^t)\dt
\end{equation}
for any $k \geq 1$, where $F_k^t:=\{x\in \Om:v_k(x)\leq t\}$. Let $\etaprimo,\etasecondo
\in(0,1)$, $\etaprimo<\etasecondo$ be fixed.
By \eqref{eq:etaprimo} and the mean value theorem 
there exists $\livellok
\in (\etaprimo,
\etasecondo)$ 
such that 
\begin{equation}\label{eq:etaeta}
C\ge C(\etaprimo,\etasecondo)
(\etasecondo-\etaprimo)
\Hausdorff(\partial^*F_k^{\livellok})\,.
\end{equation}
Moreover, using also that $\psi$ is increasing,
\begin{equation}\label{eq:due}
C\ge \int_\Omega \psi(v_k)f( |\nabla\varphi_k|)\dx\ge C
\psi(\etaprimo)\int_{\Omega}
\chi_{\Omega\setminus F^{\livellok}_k}
|\nabla\varphi_k|
\dx\,.
\end{equation}
Setting 
$$
\nuovoliftingk
:=\varphi_k\chi_{\Omega\setminus F^{\livellok}_k}\in 
SBV^2(\Omega),
$$
 we have $S_{\nuovoliftingk}\subset \partial^*F_k^{\livellok}$  and,
concerning the approximate gradients,
$\nabla\nuovoliftingk
=\nabla \varphi_k\chi_{\Omega\setminus F_k^{\livellok}}$.
Therefore,
from \eqref{eq:due} and \eqref{eq:etaeta}, 
\begin{equation}\label{eq:pallino}
	\int_\Omega|\nabla \nuovoliftingk
|\dx+ \Hausdorff(S_{\nuovoliftingk
})\le C(\etaprimo, \etasecondo)
\end{equation}
for some $C(\etaprimo, \etasecondo)>0$ 
depending on $\etaprimo,\etasecondo$ and independent of $k$. Define
$$
\overline 
u_k:=e^{i\nuovoliftingk
}=u_k\chi_{\Omega\setminus F_k^{\livellok}}+(1,0)\chi_{F_k^{\livellok}} \qquad
\forall k \in \mathbb N,
$$
which, by 
\eqref{eq:etaeta} and \eqref{eq:due}, are 
uniformly bounded in $BV(\Om; \Suno)$. This together with 
\begin{equation}\label{eq:zero_in_measure}
\vert F_k^{\livellok}\vert \to 0,
\end{equation}
imply that the sequence $(\overline u_k)$ weakly$^{\star}$ converges to $u$ 
in $BV(\Omega;\mathbb{S}^1)$. Hence, using \eqref{eq:pallino},
we can apply Theorem \ref{teo:compactness} 
to the sequence
$(\nuovoliftingk
)_{k\ge1}$ and get, for a not-relabelled subsequence, a  lifting $\varphi_\infty\in GBV(\Omega)$ 
of $u$, such that $(\phi_k)$ converges 
locally modulo $2\pi$ to $\varphi_\infty$.
Namely, there exists a Caccioppoli
partition $\{E_i\}_{i\in \mathbb{N}}$ of $\Omega$, sequences $(d^{\pip}_k)_{k\ge1}\subset\mathbb{Z}$ 
for any integer $i\ge1$ with the following properties:
	\begin{equation*}
	\begin{split}
		&\lim_{k \to +\infty}(\nuovoliftingk
(x)-2\pi d^{\pip}_k)= \varphi_\infty(x)\qquad 
\forall i \in \NN, 
\text{ for a.e. }x\in E_{i}\,,\\
		&\lim_{k \to +\infty}|\nuovoliftingk
(x)-2\pi d^{\pip}_k|=+\infty\qquad \forall i \in \NN, 
\text{ for a.e. }x\in \Om\setminus E_i.
	\end{split}
\end{equation*}
Again, using
\eqref{eq:zero_in_measure},
the same holds for $\varphi_k$, i.e.,
	\begin{equation}\label{eq:conv-in-Ei}
	\begin{split}
		&\lim_{k \to +\infty}(\varphi_k(x)-2\pi d^{\pip}_k)= \varphi_\infty(x)
\qquad 
\forall i \in \NN, 
\text{ for a.e. }x\in E_{i}\,,\\
		&\lim_{k \to +\infty}|\varphi_k(x)-2\pi d^{\pip}_k|= +\infty\qquad 
\forall i \in \NN, 
\text{ for a.e. }x\in \Om\setminus E_{i}.
	\end{split}
\end{equation}
In particular, to prove the validity of \eqref{lw_bound2} it is sufficient to show  
 \begin{equation*}\label{liminf-MM-bis}
 	\liminf_{k\to+\infty}
 \ATtwoepsk
 (u_k,v_k)\ge \int_\Omega f( |\nabla \varphi_\infty| )\dx+|D^c\varphi_\infty|(\Omega)+\int_{S_u}g(|\varphi_\infty^+-\varphi_\infty^-|){\rm d}\Hausdorff \,.
 \end{equation*}
since, being $\varphi_\infty$ a lifting of $u$, we have 
$$ \int_\Omega f( |\nabla \varphi_\infty| )\dx+|D^c\varphi_\infty|(\Omega)= \int_\Omega f( |\nabla u| )\dx+|D^cu|(\Omega)\,,$$
and 
$$
\int_{S_u}g(|\varphi_\infty^+-\varphi_\infty^-|){\rm d}\Hausdorff \geq m_g[u]\,.
$$
 For any integer $K\ge1$ we consider the truncated function 
 \begin{equation*}\label{eq:tru}
 	\trunkphi_k^{i,K}:=((\varphi_k-2\pi d_k^{\pip})\wedge K)\vee (-K)\in W^{1,2}(\Omega)\,,
 \end{equation*}
and 
$$
\phi_k^{i,K}:= \trunkphi_k^{i,K}\chi_{\Omega\setminus F_k^{\livellok}}.
$$

According to $S_{\phi_k^{i,K}}\subset\partial^*F_k^{\livellok}$ and $\jump{\phi_k^{i,K}}\le K$, we have
\begin{equation*}\begin{split}
|D\phi_k^{i,K}|(\Omega)&= \int_\Omega|\nabla \nuovoliftingk
|\chi_{\{|\varphi_k-2\pi d_k^{\pip}|<K\}}\dx+ \int_{S_{\phi_k^{i,K}}}\jump{\phi_k^{i,K}}\,{\rm d}\mathcal{H}^{n-1} \\
&\le \int_\Omega|\nabla \nuovoliftingk
|\dx+ K\mathcal{H}^{n-1}(\partial^* F_k^{\livellok})\le C\,.
	\end{split}
\end{equation*}
Hence, up to a subsequence (depending on $K$), $(\phi_k^{i,K})$ 
converges 
to some $\phi^{i,K}_\infty$ in $L^1(\Omega)$  as $k \to +\infty$. 
Moreover 
from \eqref{eq:conv-in-Ei} it holds $\phi^{i,K}_\infty
:= (\varphi_\infty\wedge K)\vee (-K)$ in $E_i$. Let  also $F_i^\pm$ be 
such that
$\Omega\setminus E_i=F_i^+\cup F_i^-$ and  $\phi^{i,K}_\infty=\pm K$ in $F_i^{\pm}$. 
As $|F_k^{\livellok}|\to 0$ it 
follows that $(\varphi_k^{i,K})$ 
converges to $\phi_\infty^{i,K}$ in $L^1(\Omega)$
 as $k \to +\infty$.
Hence the sequence
 $((\varphi_k^{i,K},v_k))$ converges
to $(\phi_\infty^{i,K},1)$ in $L^1(\Omega)\times L^1(\Omega)$ and 
\begin{equation*}
	\begin{split}
\int_\sottoaperto \left(\psi(v_k) f(|\nabla \varphi_k^{i,K}| )+ \var_k|\nabla v_k|^2+\frac{W(v_k)}{\var_k}\right)\dx\le C\,,
	\end{split}
\end{equation*}
for any open set $\sottoaperto\subset\Omega$, for some 
$C>0$ independent of $k$. Thus, from  Theorem \ref{thm:Linear-Ambrosio-Tortorelli} and Remark \ref{rem:decoupling}-(ii)
\begin{equation}\label{conv-AT-lin}
	\begin{split}
			\liminf_{k \to +\infty}& \int_A\left(\psi(v_k) f(|\nabla \varphi_k| )+ \var_k|\nabla v_k|^2+\frac{W(v_k)}{\var_k}\right)\dx\\
		&\ge
	\liminf_{k \to +\infty} \int_A\left( \psi(v_k) f(|\nabla \varphi_k^{i,K}| )+ \var_k|\nabla v_k|^2+\frac{W(v_k)}{\var_k}\right)\dx\\
	&\ge 
	\int_A f( |\nabla \phi_\infty^{i,K}| )\dx+|D^c\phi_\infty^{i,K}|(A)+\int_{S_{\phi_\infty^{i,K}}\cap A}
	g(|(\phi_\infty^{i,K})^+-(\phi_\infty^{i,K})^-|){\rm d}\Hausdorff \,.
	\end{split}
\end{equation}
In particular Remark \ref{rem:decoupling} together with $|(\phi_\infty^{i,K})^+-(\phi_\infty^{i,K})^-|\le2K$ imply $\phi_\infty^{i,K}\in BV(\Omega)$.

 Consider the 
bounded positive measure  given by
\begin{equation*}
	\lambda(\Borelset):=\int_\Borelset
f (|\nabla \varphi_\infty|) \dx+ |D^c\varphi_\infty|(\Borelset)
+ \mathcal{H}^{n-1}(S_{\varphi_\infty}\cap \Borelset)\quad \forall 
\text{ Borel set }\Borelset\subseteq\Omega\,.
\end{equation*}
{
For each $i, K\in \mathbb N$ we define the following functions
\begin{equation*}
	h^{i,K}(x):=\begin{cases}
		g(|(\phi_\infty^{i,K})^+-(\phi_\infty^{i,K})^-|)	&\text{if }x\in  S_{\phi_\infty^{i,K}}\\[1em]
		1	& \text{otherwise in }\Omega.
	\end{cases}\,
\end{equation*}
}
{Thus from \eqref{conv-AT-lin} 
we have 
\begin{equation*}
		\liminf_{k \to +\infty} \int_A\left(\psi(v_k )f(|\nabla \varphi_k| )+ \var_k|\nabla v_k|^2+\frac{W(v_k)}{\var_k}\right)\dx\ge \int_Ah^{i,K}{\rm d}\lambda\quad\forall i,K \in \mathbb N,
\end{equation*}
for all $A\in\mathcal{A}(\Omega)$.
Next observing that 
\begin{equation*}
	h(x):=\sup_{i,K \in \mathbb N}h^{i,K}(x)=\begin{cases}
	1&\text{if }x\in\Om\setminus S_{\varphi_\infty}\\[1em]
	g(|\varphi_\infty^+-\varphi_\infty^-|)& \text{if }x\in  S_{\varphi_\infty},
	\end{cases}
\end{equation*}
by invoking Lemma \ref{lem:localisation-lemma} we conclude.}

\black

\medskip

\noindent
\textit{Step 2: Upper bound}. Let  $\var_k\searrow0$ and 
 $u\in BV(\Omega;\Suno)$. We 
have to find a sequence $((u_k,v_k))\subset
\domATpiccolo$ converging to $(u,1)$ in $L^1(\Omega;\Suno)\times L^1(\Omega)$ and
\begin{equation*}\label{up_bound2}
	\limsupk 
\ATtwoepsk
(u_k,v_k) \le \MSlift
(u,1)\,.
\end{equation*}

By Corollary \ref{cor:existence-minimizer} 
we can select a jump minimizing lifting 
$\varphi \in GBV(\Omega)$ of $u$ in $\Omega$,  
and 
\begin{equation}\label{quasi-min}
\int_{S_\varphi}g(|\varphi^+-\varphi^-|){\rm d}	\Hausdorff=m_g[u]\,.
\end{equation}

By Theorem \ref{thm:Linear-Ambrosio-Tortorelli} there exist 
 $(\varphi_k,v_k)\in W^{1,2}(\Omega) \times W^{1,2}(\Omega)$ 
such that $(\varphi_k,v_k)\to(\varphi,1)$ in $L^1(\Omega) \times L^1(\Omega)$ and 
\begin{equation}\label{at-varphi}
\limsupk {\rm F}_{\var_k}(\varphi_k,v_k)
\le {\rm F}(\varphi,1)\,.
\end{equation}
Next we let $u_k:=e^{i\varphi_k}\in W^{1,2}(\Omega;\Suno)$, so 
tat $(u_k,v_k)\to(u,1)$ in $L^1(\Omega;\Suno)
\times L^1(\Omega)$. 
 Moreover, from \eqref{quasi-min}, \eqref{at-varphi}, $|\nabla u_k|=|\nabla\varphi_k|$ and $|\nabla u|=|\nabla\varphi|$
we get $${\rm F}_{\var_k}(\varphi_k,v_k)=\ATtwoepsk
(u_k,v_k)\,,\quad {\rm F}(\varphi,1)={\rm F}_{\rm lift}(u,1)$$ and so
\begin{equation*}
	\limsupk \ATtwoepsk
(u_k,v_k) \le {\rm F}_{\rm lift}(u,1)\,.
\end{equation*}
\qed

\subsection*{Data availability statement} No new data was created or analysed in this work.

\subsection*{Conflict of interest statement} The authors have no conflict of interest to declare that are relevant to the content of this article.

\subsection*{Acknowledgements}
R. Marziani has received funding from the European Union's Horizon research and innovation program under the Marie Skłodowska-Curie agreement No 101150549. 
The research that led to the present paper was partially supported by
 the INdAM-GNAMPA project 2025 CUP E5324001950001. The first and third authors also acknowledge the partial financial support of the PRIN project 2022PJ9EFL "Geometric Measure Theory: Structure of Singular
Measures, Regularity Theory and Applications in the Calculus of Variations'', PNRR Italia Domani, funded
by the European Union via the program NextGenerationEU (Missione 4
Componente 2) CUP:E53D23005860006. Views
and opinions expressed are however those of the authors only and do not necessarily reflect those
of the European Union or the European Research Executive Agency. Neither the European Union
nor the granting authority can be held responsible for them.

\end{document}